\DeclareMathOperator{\tp}{tp}
\newcommand{\eps}{\varepsilon}
\newcommand{\CC}{\mathbb{C}}
\newcommand{\DD}{\mathbb{D}}
\newcommand{\LL}{\mathbb{L}}
\newcommand{\NN}{\mathbb{N}}
\newcommand{\OO}{\mathbb{O}}
\newcommand{\PP}{\mathbb{P}}
\renewcommand{\SS}{\mathbb{S}}
\newcommand{\TT}{\mathbb{T}}
\newcommand{\ZZ}{\mathbb{Z}}
\newcommand{\mcA}{\mathcal{A}}
\newcommand{\mcB}{\mathcal{B}}
\newcommand{\mcC}{\mathcal{C}}
\newcommand{\mcG}{\mathcal{G}}
\newcommand{\mcK}{\mathcal{K}}
\newcommand{\mcP}{\mathcal{P}}
\newcommand{\mfF}{\mathfrak{F}}
\newcommand{\mfJ}{\mathfrak{J}}
\newcommand{\mfK}{\mathfrak{K}}
\title{Local-Order-Invariant Logic on Classes of Bounded Degree} %TODO Please add
\author{Derek {R. Aoki}}{Proof School, USA\and \url{https://derekaoki.com/} }{derekaoki@acm.org}{https://orcid.org/0000-0003-0352-1963}{}%TODO mandatory, please use full name; only 1 author per \author macro; first two parameters are mandatory, other parameters can be empty. Please provide at least the name of the affiliation and the country. The full address is optional. Use additional curly braces to indicate the correct name splitting when the last name consists of multiple name parts.
\authorrunning{J. Open Access and J.\,R. Public} %TODO mandatory. First: Use abbreviated first/middle names. Second (only in severe cases): Use first author plus 'et al.'
\keywords{Model theory, Finite model theory, Order-invariance, Invariant logics, Locality} %TODO mandatory; please add comma-separated list of keywords
\begin{document}

\maketitle

%TODO mandatory: add short abstract of the document
\begin{abstract}
    Local-order-invariant (first-order) logic is an extension of first-order logic where formulae have access to a ternary local order relation on the Gaifman graph, provided that the truth value does not depend on the specific order relation chosen. Weinstein asked a number of questions about the expressive power of order-invariant and local-order-invariant logics on classes of finite structures of bounded degree, classes of finite structures in general, and classes of locally finite structures. We answer four of his five questions, including showing that local-order-invariant logic collapses to first-order logic on classes of bounded degree. We also investigate epsilon-invariant logic. We show that epsilon-invariant logic collapses to first-order logic on classes of bounded degree by containing it in local-order-invariant logic in this setting, and we give an upper bound for epsilon-invariant logic in terms of local-order-invariant logic on general finite structures. Finally, in the process of proving these theorems, we demonstrate some principles which suggest further directions for showing upper bounds on invariant logics, including an upper bound on epsilon-invariant logic in general.
\end{abstract}

\section{Introduction}

Finite model theory is the study of the descriptions of finite structures, or databases, in formal languages. Generally, questions of concern are about the expressivity or computational complexity of these languages. These languages are known to capture various aspects of computation, such as the seminal result of Fagin \cite{conf/cc/Fagin74} equating the queries in $\mathrm{NP}$ to those expressible in existential second-order logic.

In standard logics like first-order logic, fixed-point and transitive closure logics, or the aforementioned existential second-order logic, one can only refer to the relations of the model itself\textemdash the semantics. One might wish, though, to be able to reference predicates not in the vocabulary of the input databases, such as the order in which the elements of the database are stored, or the successor relation corresponding to that order. A logic must treat isomorphic structures in the same way; to make this use of orderings or successors meaningful, we must say that the query must return the same answer regardless of ordering. The queries which reference orderings but for which changing the ordering does not change the truth value are called order-invariant. 

From here, when we discussion invariant logics, we will specifically refer to invariant first-order logic (as opposed to, for instance, fixed-point logic). By the result of classical model theory known as the Craig Interpolation Theorem, any query defined over arbitrary (possibly infinite) structures which is invariant in this way must be first-order definable. However, the Interpolation Theorem fails in the finite, and this collapse does not occur in the finite either. Thus, there has been a research program investigating the expressivity and computational complexity of such invariant logics on finite structures, of which we will describe the most relevant strands. One strand of investigation has been into finding examples which separate various invariant  logics from first-order logic \cite{DBLP:journals/jsyml/Otto00,DBLP:journals/jsyml/Rossman07}. A second has been into computational and descriptive complexity of invariant logics \cite{DBLP:conf/birthday/BhaskarLW24,DBLP:conf/lics/EngelmannKS12,DBLP:journals/corr/HeuvelKPQRS17}. A third has been into demonstrating collapse in specific instances, such as for successor-invariant logic on classes of bounded degree \cite{DBLP:journals/lmcs/Grange21} or for order-invariant logic on trees, strings, and hollow trees \cite{DBLP:journals/jsyml/BenediktS09,DBLP:conf/csl/GrangeS20}. A final one has been into attempting to apply more general techniques of finite and classical model theory to these logics, such as methods using types \cite{DBLP:journals/corr/BarceloL16} and locality \cite{DBLP:conf/mfcs/GroheS98,DBLP:journals/corr/HarwathS16,lindell2025hanflocalityinvariantelementary}.

A specific logic of interest for databases is epsilon-invariant logic, based on Hilbert's epsilon calculus \cite{Hilbert22} of classical model theory, as discussed in \cite{DBLP:journals/jsyml/Otto00}. This is where, for each first-order formula $\varphi(\vec x,y)$ and tuple $\vec a$, we choose a specific element $b$ of the universe such that $\varphi(\vec a,b)$ holds and fix that, referring to this chosen element in formulae by a term $\epsilon_y(\vec x,y)$. This is of interest in databases because it is in fact equal in expressiveness to the frozen-witness-invariant logic $\mathrm{FO}[witness]^+$ discussed in \cite{DBLP:conf/pods/AbiteboulSV90,DBLP:journals/amai/AbiteboulV91}. Another logic of interest, whose relationship with prior work is instead primarily through locality, is local-order-invariant logic. A local order on a simple graph is a ternary relation $\preceq(x,y,z)$ such that when $x$ is fixed, $\preceq(x,y,z)$ orders the neighbors of $x$. Such local orders have been mentioned in finite model theory before, such as in \cite{DBLP:journals/jsyml/BenediktS09,DBLP:journals/tcs/Courcelle96}, but as a basis for an invariant logic were first introduced in \cite{lindell2025hanflocalityinvariantelementary}. This notion can be easily generalized to arbitrary finite structures by considering instead the Gaifman graph of the structure.

\textbf{Our contribution}:

\begin{enumerate}
    \item In Section 3, we bound local-order-invariant logic on classes of bounded degree, following in the direction of \cite{DBLP:journals/lmcs/Grange21} and answering a question of \cite{lindell2025hanflocalityinvariantelementary}. In Section 4, answering all but one of the remaining questions of Weinstein in \cite{dawar_et_al:DagRep.7.9.1}, we show that both order- and local-order-invariant logics do not collapse to first-order logic on arbitrary finite or arbitrary locally finite structures. We also show that in many cases of concern, classes with Gaifman graphs of bounded diameter, local-order-invariant logic is just as expressive as order-invariant logic. In Section 5, we bound epsilon-invariant logic by local-order-invariant logic on classes of bounded degree, demonstrating that it collapses to first-order logic on such classes.
    \item We believe that this is the first use of Hanf threshold locality to upper bound invariant logics. Previous upper bound results  have followed from actually constructing the invariant structure, such as the explicit construction of a successor in \cite{DBLP:journals/lmcs/Grange21} or of orderings in \cite{DBLP:journals/jsyml/BenediktS09,DBLP:conf/csl/Grange23}. Ours, however, make use of locality theorems in \cite{lindell2025hanflocalityinvariantelementary} which have not been applied concretely until now. We also introduce a method to decompose an invariant logic into simpler ones, applying this to epsilon-invariant logic and indicating that similar decompositions could hold for other invariant logics.
\end{enumerate}

\section{Preliminaries}\label{sec:prelims}

From here, $\sigma$ will be a finite relational vocabulary. We denote by $\mfF_{\sigma}$ the finite $\sigma$ structures. For $\mcA\in\mfF_\sigma$ and $B\subseteq A$, we use the notation $\mcA\upharpoonright\mcB$ to denote the restriction of $\mcA$ to $B$; i.e., the substructure of $\mcA$ with underlying set $B$ such that for a tuple $\vec v$ of elements of $B$ and $R\in \sigma$, $R(\vec{v})$ in $\mcA\upharpoonright B$ if and only if $R(\vec v)$ in $\mcA$. If $\rho\supseteq \sigma$ is a relational vocabulary and $\mcA$ is a $\rho$ structure, we denote by $\mcA\upharpoonright_\sigma$ the reduct of $\mcA$ to $\sigma$, i.e., $\mcA$ with only the relations of $\sigma$.

\begin{definition}
    Let $\mcA$ be a $\sigma$ structure. We define the Gaifman graph of $\mcA$, denoted $\mcG(\mcA)$, such that the vertices of $\mcG(\mcA)$ are the elements of $\mcA$, and there are edges between two vertices $a,a'\in\mcA$ if and only if there exists an $n$-ary relation $R\in\sigma$ and an $n$-tuple $\vec v$ of elements of $\mcA$ such that $R(\vec v)$ in $\mcA$. We now define the following topological notions via this graph.
    \begin{enumerate}
        \item For $a,a'$ in $\mcA$, we say that $\delta^\mcA(a,a')=n$ if the minimum length of a path from $a$ to $a'$ in $\mcG(\mcA)$ is $n$. (This is the standard distance metric on a graph, and is a metric in the usual sense.)
        \item For $a$ in $\mcA$ and $r\geq 0$, $B_r^\mcA(a)$ denotes the set $\{a'\in\mcA:\delta^\mcA(a,a')\leq r\}$, and is called the ball of radius $r$ around $a$ in $\mcA$. We call $S_r^\mcA$, which denotes the set $B_r^\mcA(a)\setminus B_{r-1}^\mcA(a)$ when $r\geq 1$ and $\{a\}$ when $r=0$, the sphere of radius $r$ around $a$ in $\mcA$.
        \item For $a$ in $\mcA$ and $r\geq 0$, we denote by $N_r^\mcA(a)$ the $\sigma\cup \{c\}$ structure $A\upharpoonright B_r^\mcA(a)$ with $c$ a constant symbol which is equal to $a$. We call this the neighborhood of $a$ of radius $r$ in $\mcA$. (This is distinct from the ball of radius $r$ in that the ball is only the underlying set, while the neighborhood inherits the $\sigma$ structure of $\mcA$, and also distinguishes $a$ via $c$.) We say that the $r$-neighborhood type of $a$, denoted $\tp_r^\mcA(a)$, is the class $\{\mcB\in\mfF_\sigma:\mcB\cong N_r^\mcA(a)\}$, i.e., the class of $\sigma$ structures isomorphic to it.
    \end{enumerate}
\end{definition}

The Gaifman graph is an important object of study because it lets us define notions of locality. Locality theorems tell us that it is enough to look at the neighborhoods of the points of a structure to determine first-order properties of the points and the structure. This is partially encapsulated in the following definition and theorem:

\begin{definition}
    Let $r,t\in \omega$. We say that $\mcA$ and $\mcB$ in $\mfF_\sigma$ are Hanf threshold $r,t$-equivalent, denoted $\mcA\approx_{r,t}\mcB$, if $\min(|\{a\in \mcA:\tp_r^\mcA(a)=\tau\}|,t)=\min(|\{b\in \mcB:\tp_r^\mcB(b)=\tau\}|,t)$ for every isomorphism type $\tau$ of $r$ neighborhoods in $\sigma$. We say that $\mfJ\subseteq \mfF_\sigma$ is Hanf $r,t$-threshold local if $\mcA\approx_{r,t}\mcB$ implies that $\mcA\in\mfJ\Leftrightarrow \mcB\in\mfJ$.
\end{definition}

We say that a query $\mfJ\subseteq \mfF_\sigma$ is elementary if it is exactly the models of some first-order formula $\varphi$ in the language of $\sigma$. We have the following theorem, originally proved by Hanf for infinite models in \cite{conf/mt/Hanf65} and later for finite models by Fagin, Stockmeyer, and Vardi in \cite{DBLP:journals/iandc/FaginSV95}:

\begin{theorem}[Fagin-Stockmeyer-Vardi]\label{thm:hanflocalitytheorem}
    Every elementary query on $\mfF_\sigma$ is Hanf $r,t$-threshold local for some $r,t$.
\end{theorem}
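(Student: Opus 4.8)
The plan is to run the Ehrenfeucht--Fra\"iss\'e argument of Hanf in its finite form. Fix a first-order sentence $\varphi$ of quantifier rank $q$ defining the query; I must produce $r$ and $t$ so that $\mcA\approx_{r,t}\mcB$ forces $\mcA\models\varphi\Leftrightarrow\mcB\models\varphi$. I would use the radius schedule $r_q:=0$ and $r_i:=3r_{i+1}+1$ for $i<q$, set $r:=r_0=(3^{q}-1)/2$, and take $t:=qM+1$, where $M$ is an upper bound on the number of elements of a radius-$r$ ball in the structures at hand. It then suffices to show that $\mcA\approx_{r,t}\mcB$ lets Duplicator win the $q$-round game on $\mcA,\mcB$: by the Ehrenfeucht--Fra\"iss\'e theorem this yields agreement on all sentences of quantifier rank at most $q$, in particular on $\varphi$.

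The induction is on the round number, with the invariant that after round $i$, with pebbles on $a_1,\dots,a_i$ and $b_1,\dots,b_i$, the map $a_j\mapsto b_j$ extends to an isomorphism $\bigcup_j N_{r_i}^{\mcA}(a_j)\cong\bigcup_j N_{r_i}^{\mcB}(b_j)$ of the marked $r_i$-neighborhoods; before round $1$ this is vacuous, and $\mcA\approx_{r,t}\mcB$ is what Duplicator invokes each round (in round $1$ it already supplies, for any point Spoiler picks, a point of $\mcB$ with isomorphic $r_1$-neighborhood, after refining the $r_1$-type to an $r$-type). For the inductive step, say Spoiler plays $a\in\mcA$. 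If $a$ is within distance $2r_{i+1}+1$ of some pebble $a_j$, then $B_{r_{i+1}}^{\mcA}(a)\subseteq B_{r_i}^{\mcA}(a_j)$, Duplicator returns the image of $a$ under the current isomorphism, and because $r_i=3r_{i+1}+1$ the new $r_{i+1}$-neighborhoods nest inside the old $r_i$-neighborhoods on both sides, so the isomorphism just restricts. Otherwise $a$ is far from every pebble, $B_{r_{i+1}}^{\mcA}(a)$ is disjoint from all the $B_{r_{i+1}}^{\mcA}(a_j)$, and Duplicator needs $b\in\mcB$, far from all the $b_j$, with $N_{r_{i+1}}^{\mcB}(b)\cong N_{r_{i+1}}^{\mcA}(a)$; given such $b$, the new neighborhoods attach as disjoint summands (no $\sigma$-tuple crosses a gap of distance $\ge 2$), so the extended isomorphism is the old one together with this local isomorphism.

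Producing this far witness $b$ is the crux and the step I expect to be the main obstacle. Put $\tau:=\tp_{r_{i+1}}^{\mcA}(a)$, and call a realization $a'$ of $\tau$ \emph{close} if $\delta^{\mcA}(a',a_j)\le 2r_{i+1}+1$ for some $j$ and \emph{far} otherwise. Refining $\tau$ into the finitely many $r$-types extending it, $\mcA\approx_{r,t}\mcB$ controls $\min(\cdot,t)$ of the total number of realizations of $\tau$ on each side. The current isomorphism carries close realizations in $\mcA$ bijectively onto close realizations in $\mcB$ (a close realization and its $r_{i+1}$-neighborhood lie inside $\bigcup_j N_{r_i}(\cdot)$ and are preserved), so these counts match exactly, and there are at most $qM$ of them. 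Since $a$ is a far realization, $\mcA$ has at least one; combining the exact match of close counts with the threshold match of totals, and using $t>qM$, forces $\mcB$ to have a far realization as well, and that is the required $b$. Making this precise is the delicate point: $t$ must be taken uniformly above every $qM$ arising across the $q$ rounds, which forces $M$ to be a genuine finite bound on the size of bounded-radius balls; this is exactly where a bound on the degree of the structures (equivalently, on bounded-radius ball sizes) is consumed, and without one no single $t$ can work. The remaining verifications — the radius arithmetic making the close/far dichotomy exhaustive, the absence of crossing $\sigma$-tuples between far neighborhoods, and the preservation of counts under type refinement — are routine.
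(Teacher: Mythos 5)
The paper offers no proof of this statement; it is quoted from Fagin--Stockmeyer--Vardi, so your attempt can only be measured against the standard proof of that cited result. Your proposal is exactly that standard proof: the radius schedule $r_i=3r_{i+1}+1$ with $r=(3^q-1)/2$, the game invariant that the pebbled $r_i$-neighborhoods carry an isomorphism, the close/far dichotomy for Spoiler's move, and the counting argument that converts threshold agreement of type multiplicities into the existence of a far witness of the required $r_{i+1}$-type. Each of these steps is sound, your identification of the far-witness step as the crux is accurate, and the details you defer (the radius arithmetic, the absence of $\sigma$-tuples spanning distance $\geq 2$, preservation of counts under type refinement) are indeed routine.

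The one genuine problem is the mismatch you yourself flag in the last paragraph. Your threshold is $t=qM+1$ with $M$ a finite bound on the size of radius-$r$ balls, and no such bound exists uniformly over $\mfF_\sigma$: the statement as printed quantifies over all finite $\sigma$-structures, whereas your argument (like the theorem Fagin, Stockmeyer, and Vardi actually prove, whose hypotheses include a bound on degrees, equivalently on neighborhood sizes) yields, for each $d$, a pair $(r,t)$ valid on $\mfF_\sigma^d$ only. So as a proof of the literal statement the argument is incomplete at exactly the point you identify, and you supply neither a repair (a choice of $t$ independent of ball sizes) nor a counterexample substantiating your claim that ``without one no single $t$ can work.'' That said, the defect is arguably in the statement rather than in your proof: every use of Hanf threshold locality in this paper --- \autoref{lem:defhanf}, \autoref{thm:localinvhanf}, and their combination in \autoref{thm:localordercollapse} --- takes place on $\mfF_\sigma^d$, where your argument is complete. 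The statement should either be restricted to $\mfF_\sigma^d$ to match the cited source, or the claim over all of $\mfF_\sigma$ should be acknowledged as something your proof does not establish.
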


Locality results tend not to extend to other logics, as sufficiently powerful ones including monadic second-order logic ($\mathrm{MSO}$) or first-order logic augmented with a transitive closure operator ($\mathrm{FO(TC)}$) can see past the neighborhood of a point. However, the following several definitions will be used to introduce a class of logics which do satisfy the Hanf threshold locality theorem above. Specifically, we will use the framework of \cite{lindell2025hanflocalityinvariantelementary} to introduce the notions of invariant logics.

\begin{definition}
    Let $\mfK$ be a subset of $\mfF_{\sigma}$, $R$ a relation symbol of some arity, and $\PP$ a subset of $\mfF_{\sigma\cup\{R\}}$.
    \begin{enumerate}
        \item For $\mcA'\in \mfF_{\sigma\cup\{R\}}$ and $A\in \mfF_\sigma$, we write $\mcA'\PP \mcA$ if $\mcA'\in \PP$ and $\mcA'\upharpoonright_\sigma=A$ (where $\mcA'\upharpoonright_\sigma$ denotes the reduct of $\mcA'$ to $\sigma$).
        \item We say that $\PP$ is an $R$-presentation scheme for $\mfK$ if for every $\mcA$ in $\mfK$, there exists $\mcA'\in \PP$ such that $\mcA'\PP\mcA$.
        \item We say that $\PP$ is an elementary $R$-presentation scheme for $\mfK$ if it is a first-order definable subset of $\{\mcA'\in \mfF_{\sigma\cup\{R\}}:\mcA'\upharpoonright_\sigma\in\mfK\}$.
    \end{enumerate}
\end{definition}

The first example of presentation schemes to be studied, albeit not in those terms, was order-invariant logic, in which $R$ is binary and a linear order on $\mcA$. This is, notably, an elementary presentation scheme.  Other prevalent examples are traversal-invariant logic, which is when $R$ is binary relation on specifically a simple graph and a graph-theoretic traversal, and successor-invariant logic, when $R$ is binary and a successor relation on a structure corresponding to some linear ordering. The former is elementary, while the latter is not. Each of these have been shown to be strictly stronger in terms of expressive power than first-order logic. The following will be most central to us in this article:

\begin{definition}
    A ternary relation $\preceq(x,y,z)$ is a local order on a simple graph $G$ if for fixed $a\in G$, $\preceq_{a}(y,z)\equiv \preceq(a,y,z)$ orders the $G$-neighbors of $A$.
\end{definition}

As we will see later, this is an elementary presentation scheme. Local orders have some other properties which will be necessary for us to apply theorems of \cite{lindell2025hanflocalityinvariantelementary}, such as the following two:

\begin{definition}
    Let $\PP$ be an $R$-presentation scheme on $\mfK\subseteq \mfF_\sigma$. We say that $\PP$ is neighborhood bounded if there exists $r_\PP$ such that for every $\mcA'$ in $\PP$ and every $a$ in $\mcA'$, we have $B_1^{\mcA'}(a)\subseteq B_{r_\PP}^{\mcA'\upharpoonright_{\sigma}}(a)$.
\end{definition}

\begin{definition}
    Let $\PP$ be an $R$-presentation scheme on $\mfK\subseteq \mfF_\sigma$, with $\mfK$ closed under substructures. We say that $\PP$ is local if for every $\mcA\in \mfK$ and every disjoint $B,C\subseteq \mcA$, the following conditions hold, where we denote $\mcB=\mcA\upharpoonright B$ and $\mcC=\mcA\upharpoonright C$.
    \begin{bracketenumerate}
        \item For every $\mcA'\in\PP$ such that $\mcA'\PP\mcA$, we have $(\mcA'\upharpoonright B)\PP \mcB$.
        \item For every $\mcB',\mcC'\in\PP$ such that $\mcB'\PP\mcB$ and $\mcC'\PP\mcC$, there exists $\mcA'\in \PP$ such that $\mcA'\PP\mcA$, $(\mcA'\upharpoonright B)=\mcB'$, and $(\mcA'\upharpoonright C)=\mcC'$.
    \end{bracketenumerate}
\end{definition}

Condition (1) is called localization, and Condition (2) is called disjoint local amalgamation, but the particulars of these will be important only for one lemma each. Finally, we introduce the actual notion of invariant definability.

\begin{definition}
    Let $\PP$ be an $R$-presentation scheme for a class $\mfK\subseteq \mfF_\sigma$.
    \begin{enumerate}
        \item A first-order sentence $\varphi$ in the language of $\sigma\cup\{R\}$ is $\PP$-invariant over $\mfK$ if and only if for every $\mcA\in\mfK$, $(\exists\mcA'\in\PP)(\mcA'\PP\mcA\wedge\mcA'\models\varphi)\Leftrightarrow (\forall \mcA'\in\PP)(\mcA'\PP\mcA\rightarrow \mcA'\models\varphi)$.
        \item $\mfJ\subseteq \mfK$ is $\PP$-invariant elementary on $\mfK$ if and only if there exists a first-order sentence $\varphi$ which is $\PP$-invariant on $\mfK$ such that for all $\mcA\in\mfK$, $\mcA\in\mfJ$ if and only if $(\exists\mcA'\in\PP)(\mcA'\PP\mcA\wedge\mcA'\models\varphi)$.
    \end{enumerate}
\end{definition}

\section{Collapse of Local-Order-Invariant Logic}

In this section, we introduce the notion of a local order on an arbitrary structure of vocabulary $\sigma$, show that the presentation scheme for local orders is local and neighborhood-bounded, prove via a theorem of \cite{lindell2025hanflocalityinvariantelementary} that local-order-invariant elementary queries are Hanf $r,t$-threshold local for some $r,t\in \omega$, and conclude by demonstrating that Hanf $r,t$-threshold local Boolean queries on classes of bounded degree are elementary.

We begin with said definition of arbitrary local orders.

\begin{definition}
    A ternary relation $\preceq(x,y,z)$ is said to be a local order on $\mcA\in\mfF_\sigma$ if when $x$ is fixed, $\preceq$ linearly orders the neighbors of $x$ in $\mcG(\mcA)$. We define the $\preceq$-presentation scheme $\OO$, where $\preceq$ is ternary, as the class of structures $(\mcA,\preceq)$ where $\mcA$ is in $\mfF_\sigma$ and  $\preceq$ is a local order on $\mcA$. We define the $\leq$-presentation scheme $\LL$ as the class of structures of the form $(\mcA,\leq)$ where $\leq$ is a linear order on $\mcA$.
\end{definition}

It is well-known and clear that $\LL$ is elementary, as was remarked earlier. We mention the existence of the following family of formulae $\varphi_{LO,\psi}(R,y)$, where $\psi(\vec x,y)$ is first-order and $R$ is binary, which says that $R$ is a linear order on the substructure $\mcA\upharpoonright \{a\in\mcA:\psi(\vec x,a)\text{ in }\mcA\}$ for every $\vec x$. The full first-order definition is left for \autoref{proof:elempres}.

With that out of the way, we introduce the theorem of \cite{lindell2025hanflocalityinvariantelementary} we seek to apply.

\begin{theorem}[Lindell-Towsner-Weinstein]\label{thm:localinvhanf}
    Let $\mfF^d_\sigma$ denote the class of elements of $\mfF_\sigma$ with Gaifman graphs of degree at most $d$. Let $\PP$ be a local, neighborhood-bounded elementary $R$-presentation scheme for $\mfF_\sigma^d$. If $\mfJ\subseteq \mfF^d_\sigma$ is an elementary $\PP$-invariant Boolean query on $\mfF_\sigma^d$, then there exist $r,t\in \omega$ such that $\mfJ$ is Hanf $r,t$-threshold local.
\end{theorem}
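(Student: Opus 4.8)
The plan is to combine the Fagin--Stockmeyer--Vardi locality theorem applied to the \emph{defining sentence} with the locality of the presentation scheme $\PP$, using the latter to transport a presentation from one structure onto a Hanf-equivalent one. Fix a $\PP$-invariant first-order sentence $\varphi$ over $\sigma\cup\{R\}$ witnessing that $\mfJ$ is $\PP$-invariant elementary, and let $r_\PP$ be the neighborhood-bounding radius of $\PP$. I would begin with two reductions. By neighborhood-boundedness and induction on $k$, for every $\mcA'\in\PP$ and $a\in\mcA'$ one has $B_k^{\mcA'}(a)\subseteq B_{k\,r_\PP}^{\mcA'\upharpoonright_\sigma}(a)$; hence, writing $q:=r_0 r_\PP$, the expanded $r_0$-neighborhood type $\tp_{r_0}^{\mcA'}(a)$ is a function of the ``$q$-presented ball'' $\mcA'\upharpoonright B_q^{\mcA'\upharpoonright_\sigma}(a)$, and in particular two points with the same $q$-presented ball have the same expanded $r_0$-type. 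Next, applying \autoref{thm:hanflocalitytheorem} to the elementary query $\{\mcC\in\mfF_{\sigma\cup\{R\}}:\mcC\models\varphi\}$ furnishes $r_0,t_0\in\omega$ with $\mcC\approx_{r_0,t_0}\mcD\Rightarrow(\mcC\models\varphi\Leftrightarrow\mcD\models\varphi)$. The bounded-degree hypothesis enters here only indirectly: neighborhood-boundedness forces a uniform degree bound $d'$ on the Gaifman graphs of all presentations of structures in $\mfF_\sigma^d$, so balls of any fixed radius (in reducts and in presentations) have bounded size.

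It then suffices to establish the following \emph{transport statement}: there are $r\gg q$ and a sufficiently large $t$ (depending on $t_0$, $d'$, $q$, and the number of relevant neighborhood types) such that whenever $\mcA\approx_{r,t}\mcB$ in $\mfF_\sigma^d$, then for every $\mcA'\PP\mcA$ there is some $\mcB'\PP\mcB$ with $\mcA'\approx_{r_0,t_0}\mcB'$. Indeed, granting this, if $\mcA\approx_{r,t}\mcB$ and $\mcA\in\mfJ$, pick any $\mcA'\PP\mcA$; by $\PP$-invariance $\mcA'\models\varphi$; transport to get $\mcB'\PP\mcB$ with $\mcA'\approx_{r_0,t_0}\mcB'$, so $\mcB'\models\varphi$ and hence $\mcB\in\mfJ$; symmetry gives the converse, so $\mfJ$ is Hanf $r,t$-threshold local.

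For the transport statement I would build $\mcB'$ by copying $\mcA'$ locally and reassembling. Since $\mcA\approx_{r,t}\mcB$ with $t\geq 1$, every $r$-neighborhood type realized in $\mcB$ is realized in $\mcA$, so for $b\in\mcB$ one may fix $a_b\in\mcA$ with $\tp_r^{\mcB}(b)=\tp_r^{\mcA}(a_b)$ and an isomorphism $N_r^{\mcB}(b)\to N_r^{\mcA}(a_b)$. Choosing a family of points of $\mcB$ whose $2q$-balls are pairwise disjoint, transport $\mcA'$ along these isomorphisms onto those balls: by localization the restriction of $\mcA'$ to each $B_{2q}^{\mcA}(a_b)$ is a $\PP$-presentation of $N_{2q}^{\mcA}(a_b)$, so its isomorphic image is a $\PP$-presentation of $N_{2q}^{\mcB}(b)$; by iterated disjoint local amalgamation these, together with an arbitrary presentation of the leftover set, merge into a single $\mcB'\PP\mcB$ restricting to each of them. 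A point deep inside a copied ball then has exactly the expanded $r_0$-type of its preimage. To force the \emph{global} profile of expanded $r_0$-types to agree up to $t_0$, one selects the copied balls so that: (i) the ``rare'' part of $\mcB$---points whose $r$-neighborhood type occurs fewer than $t$ times, a set of bounded total size since there are boundedly many such types---is copied exactly, matched to the corresponding part of $\mcA$ via the type-preserving bijection provided by $\approx_{r,t}$; and (ii) for each ``frequent'' $r$-type, bounded degree yields at least $t_0$ pairwise-far representatives in $\mcB$ to anchor copies, so every corresponding expanded $r_0$-type is already realized at least $t_0$ times by copied interiors. With $t$ large enough to dominate the relevant ball sizes and type count, the boundedly many uncontrolled points near the seams of copied regions can only inflate counts of already-frequent expanded types, and one concludes $\mcA'\approx_{r_0,t_0}\mcB'$.

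The hard part is precisely this last step: arranging the copied regions and the bijection so that the expanded $r_0$-type profiles match \emph{up to the threshold} $t_0$ and not merely approximately---reconciling overlapping or adjacent copied balls, keeping the rare part exact, and absorbing the seam effects into the frequent counts---together with pinning down $r$ and $t$ quantitatively in terms of $r_0,t_0,r_\PP,d$ and the finite number of neighborhood types. This is where the two halves of locality (localization to validate local copies, disjoint local amalgamation to glue them) are used essentially and iteratively. A Hanf-style back-and-forth argument maintaining a correspondence of bounded-radius neighborhoods together with a compatible partial choice of presentation is a plausible alternative, but it meets the same bookkeeping at the boundaries.
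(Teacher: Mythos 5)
First, a point of reference: the paper does not prove this theorem at all --- it is imported verbatim from \cite{lindell2025hanflocalityinvariantelementary} and used as a black box. So your sketch can only be judged on its own terms. Its architecture is the right one: apply \autoref{thm:hanflocalitytheorem} to the defining $\sigma\cup\{R\}$-sentence to get $(r_0,t_0)$, use neighborhood-boundedness to confine expanded $r_0$-types to $\sigma$-balls of radius $q$ and to inherit a degree bound on presentations, and then try to transport presentations across Hanf-equivalent reducts using localization and disjoint local amalgamation. (One small slip: to read off $B_{r_0}^{\mcA'}(a)$ you need the $R$-tuples witnessing Gaifman edges among points of $B_q^{\mcA}(a)$, which may reach out to $B_{q+r_\PP}^{\mcA}(a)$; harmless, just enlarge $q$.)

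The genuine gap is the final counting step of your transport statement, specifically the claim that ``the boundedly many uncontrolled points near the seams of copied regions can only inflate counts of already-frequent expanded types.'' Neither half of this is justified. A family of pairwise disjoint $2q$-balls in $\mcB$ whose interiors you copy exactly leaves uncovered, or covers only within distance $q$ of a boundary, a set whose size is in general proportional to $|\mcB|$ (a maximal scattered set in a bounded-degree graph covers only a constant fraction), so the uncontrolled points are not boundedly many. Worse, Hanf threshold equivalence is violated by a \emph{single} occurrence of an expanded $r_0$-type that is realized in $\mcB'$ but realized fewer than $t_0$ times in $\mcA'$: since $\min(0,t_0)\neq\min(1,t_0)$, the seam points need not merely ``inflate frequent counts'' --- the arbitrary presentation chosen on the leftover set, amalgamated against the copied pieces, can create expanded types that your fixed $\mcA'$ never realizes, and nothing in the construction rules this out. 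The standard lever for repairing this is one you do not pull: by $\PP$-invariance you only need to exhibit \emph{one} matching pair $\mcA'\PP\mcA$, $\mcB'\PP\mcB$ with $\mcA'\approx_{r_0,t_0}\mcB'$, not to transport an arbitrary $\mcA'$. That freedom lets you build the two presentations simultaneously and canonically (e.g.\ choosing the local presentation of each region as a function of underlying $\sigma$-neighborhood data on both sides), so that the seam regions of $\mcA'$ and $\mcB'$ realize the \emph{same} expanded types with matching threshold counts. Without that coordination, or some other mechanism controlling the seams, the proof does not close.
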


Our approach now is simply to show that $\OO$ is elementary, local, and neighborhood-bounded, after which showing that Hanf $r,t$-threshold local queries on $\mfF_\sigma^d$ are elementary is sufficient to prove our main theorem. The proofs for these lemmas are below.

We show elementarity, locality, and neighborhood-boundedness for $\mfF_\sigma$ instead of $\mfF_\sigma^d$ as this requires no extra effort, is stronger, and will be used in Section 5.

\begin{lemma}\label{lem:elempres}
    The class $\OO\subseteq \mfF_{\sigma\cup\{\preceq\}}$ of pairs $(\mcA,\preceq)$ such that $\preceq$ is a local order on $\mcA$ is an elementary $\preceq$-presentation scheme for $\mfF_\sigma$.
\end{lemma}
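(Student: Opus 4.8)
The plan is to write down an explicit first-order sentence over the vocabulary $\sigma \cup \{\preceq\}$ that is satisfied by a structure $(\mcA, \preceq)$ exactly when $\preceq$ is a local order on $\mcA$ in the sense of the preceding definition, and to check separately that every $\mcA \in \mfF_\sigma$ admits at least one such $\preceq$. For the latter point, note that the Gaifman graph $\mcG(\mcA)$ has only finitely many vertices, so for each $a$ one can pick an arbitrary linear order of its (finitely many) Gaifman-neighbors; taking the union of these choices over all $a$ yields a ternary relation $\preceq$ that is a local order, which shows $\OO$ is indeed a $\preceq$-presentation scheme for $\mfF_\sigma$ (it is total). So the real content is elementarity.

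For elementarity, the first ingredient is a first-order formula $\eta(x,y)$ in the language of $\sigma$ defining the Gaifman adjacency relation: $\eta(x,y)$ is the disjunction, over all $R \in \sigma$ of arity $n$ and all pairs of coordinate positions $i \neq j$, of $\exists \vec z\, (R(\vec z) \wedge z_i = x \wedge z_j = y)$, together with a disjunct $x = y$ handled separately or excluded depending on whether we want loops — since a vertex is never its own Gaifman-neighbor, we take $\eta(x,y)$ to also assert $x \neq y$. This is a fixed finite disjunction because $\sigma$ is finite and relational. Then "$\preceq$ is a local order on $\mcA$" is expressed by the sentence asserting that for every $x$, the binary relation $\preceq_x(y,z) := \preceq(x,y,z)$ restricted to $\{y : \eta(x,y)\}$ is a linear order: irreflexivity (or the appropriate reflexive variant), transitivity, totality on that set, and antisymmetry, all relativized by $\eta(x,\cdot)$. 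Concretely this is
\[
\forall x\, \Big( \forall y\, \big(\eta(x,y) \to \neg \preceq(x,y,y)\big) \wedge \forall y\, \forall z\, \forall w\, \big(\eta(x,y) \wedge \eta(x,z) \wedge \eta(x,w) \wedge \preceq(x,y,z) \wedge \preceq(x,z,w) \to \preceq(x,y,w)\big) \wedge \forall y\, \forall z\, \big(\eta(x,y) \wedge \eta(x,z) \wedge y \neq z \to \preceq(x,y,z) \vee \preceq(x,z,y)\big)\Big),
\]
modulo the usual choice between strict and non-strict orders; I would also add a sanity conjunct forcing $\preceq(x,y,z)$ to be false whenever $\neg\eta(x,y)$ or $\neg\eta(x,z)$, so that the witnessing relation is pinned down cleanly, though this is not strictly required by the definition of local order. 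Finally, elementarity as a \emph{presentation scheme} additionally requires that $\OO$ be a first-order definable subset of $\{\mcA' : \mcA'\upharpoonright_\sigma \in \mfF_\sigma\} = \mfF_{\sigma \cup \{\preceq\}}$, which is immediate since the displayed sentence lives over $\sigma \cup \{\preceq\}$ and the ambient class is all of $\mfF_{\sigma\cup\{\preceq\}}$.

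I expect no genuine obstacle here: the lemma is essentially a definitional unwinding, and the only mild subtlety is being careful that the Gaifman-graph-definability formula $\eta$ correctly captures adjacency (finiteness of $\sigma$ and of the arities makes this a bona fide finite formula) and that the relativization to $\eta(x,\cdot)$ is applied uniformly so that $\preceq$ is constrained precisely on neighbor-pairs and nowhere else. The one bookkeeping decision worth flagging is the strict-versus-reflexive convention for "linearly orders"; I would fix the convention used elsewhere in the paper and note it, since the formula $\varphi_{LO,\psi}$ mentioned just before the lemma — which relativizes a linear-order axiom to a first-order-definable set $\{a : \psi(\vec x, a)\}$ — is exactly the template here with $\psi(x,a) := \eta(x,a)$, so I would phrase the proof as an instantiation of that family, writing $\varphi_{LO,\eta}(\preceq_x, x)$ and remarking that quantifying $\forall x$ over it gives the desired sentence.
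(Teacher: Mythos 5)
Your proposal is correct and follows essentially the same route as the paper: define the Gaifman adjacency formula $\eta(x,y)$ as a finite disjunction over the relations of $\sigma$, relativize the linear-order axioms (via the template $\varphi_{LO,\psi}$) to the neighbors of $x$, and add the conjunct forcing $\preceq(x,y,z)$ to be false off neighbor-pairs; the paper implements the ``neighbors of $x$, excluding $x$ itself'' restriction via the sphere formula $\phi_1$ where you add $x\neq y$ to $\eta$, which is the same thing. Your explicit check that every finite structure admits at least one local order (totality of the scheme) is a point the paper leaves implicit here, and your flag about the strict-versus-reflexive convention is a reasonable bookkeeping remark but not a gap.
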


\begin{proof}
    The approach is fairly clear: we construct a formula of the form $\beta_r(x,y)$ for every $r$ such that for fixed $x$, $\beta_r(x,y)$ holds if and only if $y\in B_r^\mcA(x)$; from there, we can say that $\preceq(x,y,z)$ is false when $y$ and $z$ are not both in $S_1^\mcA(x)$ and defines a linear ordering otherwise. The technical details are done in \autoref{proof:elempres}, but we will give an overview. To set up, we define the edge relationship of $\mcG(\mcA)$ in a formula $\eta(x,y)$, inductively define $\beta_r(x,y)$ from $\beta_{r-1}(x,y)$, and define a formula $\phi_r(x,y)$ which says that $y$ is in $S_r^\mcA(x)$. These are done in more generality here than strictly necessary for this lemma because the definability of balls and spheres will be useful frequently. Finally, we simply write out a sentence which says that 
    \begin{enumerate}
        \item $\preceq(x,y,z)$ is restricted to elements satisfying $\phi_1(x,y)$, and
        \item $\preceq(x,y,z)$ restricted to $S_1^\mcA(x)$ is a linear ordering, using the $\varphi_{LO,\psi}(R,x)$ mentioned above.
    \end{enumerate}
\end{proof}

\begin{lemma}\label{lem:localpres}
    $\OO$, as defined previously, is a local and neighborhood-bounded $\preceq$-presentation scheme on $\mfF_\sigma$.
\end{lemma}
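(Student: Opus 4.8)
The plan is to verify the three defining conditions---neighborhood-boundedness, localization (Condition~(1)), and disjoint local amalgamation (Condition~(2))---in turn, exploiting the fact that a local order $\preceq$ only constrains the \emph{immediate} Gaifman neighbors of each point. First, for neighborhood-boundedness, I would take $r_\OO = 1$: if $(\mcA,\preceq)\in\OO$ and $b\in B_1^{(\mcA,\preceq)}(a)$, then $a$ and $b$ are Gaifman-adjacent in $(\mcA,\preceq)$, so either they are already $\sigma$-adjacent (hence $b\in B_1^{\mcA}(a)$) or the only new edges come from tuples in the relation $\preceq$; since $\preceq(x,y,z)$ is supported on triples with $y,z\in S_1^{\mcA}(x)$, any $\preceq$-edge between $a$ and $b$ forces $b$ to already be a $\sigma$-neighbor of $a$ or of some common $\sigma$-neighbor, so in all cases $b\in B_1^{\mcA}(a)=B_{r_\OO}^{\mcA\upharpoonright_\sigma}(a)$. (One must be slightly careful about which of the three coordinates of $\preceq$ creates a Gaifman edge, but in every case the two endpoints lie within Gaifman distance $1$ in $\mcA$.)

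Second, for localization: given $(\mcA,\preceq)\PP\mcA$ and $B\subseteq A$, I must check that the restriction $\preceq\upharpoonright B$ is a local order on $\mcB = \mcA\upharpoonright B$. The key point is that $\mcG(\mcB)$ is a subgraph of $\mcG(\mcA)$ on the vertex set $B$ (restricting a relational structure can only remove Gaifman edges), so the $\mcB$-neighbors of a point $a\in B$ form a subset of its $\mcA$-neighbors; a linear order restricted to a subset is still a linear order, so $\preceq_a\upharpoonright B$ linearly orders the $\mcB$-neighbors of $a$. Hence $(\mcA\upharpoonright B,\preceq\upharpoonright B)\in\OO$ and it presents $\mcB$, giving Condition~(1).

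Third---and this is the step I expect to be the only one requiring genuine care---disjoint local amalgamation: given disjoint $B,C\subseteq A$ with presentations $(\mcB,\preceq_B)\PP\mcB$ and $(\mcC,\preceq_C)\PP\mcC$, I need a local order $\preceq^\ast$ on $\mcA$ restricting to $\preceq_B$ on $B$ and to $\preceq_C$ on $C$. Since $B$ and $C$ are disjoint, no point of $A$ has a neighbor constraint coming from \emph{both} $\preceq_B$ and $\preceq_C$ simultaneously, so the two partial orders do not conflict; I would define $\preceq^\ast_a$ on the $\mcA$-neighbors of each $a\in A$ as follows. For $a\in B$, take $\preceq^\ast_a$ to agree with $(\preceq_B)_a$ on those $\mcA$-neighbors of $a$ lying in $B$, and extend arbitrarily but consistently to a linear order on all $\mcA$-neighbors of $a$ (e.g.\ placing the non-$B$ neighbors after the $B$-neighbors in some fixed order); symmetrically for $a\in C$ using $\preceq_C$; and for $a\notin B\cup C$, any linear order on its $\mcA$-neighbors works. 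The one thing to verify is that this genuinely restricts correctly: when we restrict $\preceq^\ast$ to $B$, a point $a\in B$ sees only its $\mcB$-neighbors, which are $\mcA$-neighbors of $a$ lying in $B$, and on those $\preceq^\ast_a$ was defined to agree with $(\preceq_B)_a$---so $(\preceq^\ast)\upharpoonright B = \preceq_B$ as relations on $\mcB$ (and note $\preceq_B$ is empty on triples leaving $S_1^{\mcB}$, matching $\preceq^\ast$). The same holds for $C$. Thus $(\mcA,\preceq^\ast)\in\OO$, $(\mcA,\preceq^\ast)\PP\mcA$, $(\mcA,\preceq^\ast)\upharpoonright B = (\mcB,\preceq_B)$, and $(\mcA,\preceq^\ast)\upharpoonright C = (\mcC,\preceq_C)$, establishing Condition~(2). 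The subtlety to watch is purely bookkeeping: ensuring the ``extend arbitrarily'' step for $a\in B$ does not accidentally disturb the $C$-part or vice versa, which is immediate from $B\cap C=\emptyset$ since then no single point's neighbor-ordering is pinned down by both hypotheses.
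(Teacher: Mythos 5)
Your proof follows the same route as the paper's: localization via ``a linear order restricted to a subset is a linear order,'' amalgamation by stitching $\preceq_B$ and $\preceq_C$ onto an arbitrary background local order, and neighborhood-boundedness from the fact that a $\preceq$-triple only relates points near one another. One step as written is wrong, though easily repaired: $r_\OO=1$ does not work. If $\preceq(x,y,z)$ holds then $y,z\in S_1^{\mcA}(x)$, so the new Gaifman edge between $y$ and $z$ joins two points that may be at distance $2$ in $\mcG(\mcA)$ (e.g.\ two leaves of a star); your own case analysis identifies exactly this situation but then concludes $b\in B_1^{\mcA}(a)$ where only $b\in B_2^{\mcA}(a)$ is guaranteed. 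Since neighborhood-boundedness only asks for \emph{some} $r_\PP$, taking $r_\OO=2$ (as the paper does) fixes this with no other changes. One further caution, which applies equally to the paper's own argument: when $\sigma$ contains relations of arity $\geq 3$, the $\mcG(\mcA\upharpoonright B)$-neighbors of $a\in B$ can be a \emph{proper} subset of the $\mcG(\mcA)$-neighbors of $a$ lying in $B$, so your claim that $\preceq_B$ and $\preceq^\ast$ ``match'' on all triples from $B$ needs the extension of $\preceq^\ast_a$ to be chosen so that no triple with both second and third coordinates in $B$ but outside $S_1^{\mcB}(a)$ is declared true; this is arrangeable but is not automatic from disjointness of $B$ and $C$.
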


\begin{proof}
    We first address Condition (1), locality. For any $\mcA$, $\mcA'$ such that $\mcA'\OO\mcA$, and $B\subseteq \mcA$, the $\preceq$ relation of $\mcA'$ is a local order when restricted to $B$ if and only if for fixed $x$, $\varphi_{LO,\phi_1}(\preceq,x)$ holds in $\mcA'\upharpoonright B$. A linear order on a set is also a linear order on a subset, so this is true. As for Condition (2), disjoint local amalgamation, taking $B$, $C$, $\mcB'$, and $\mcC'$, we can construct the amalgamation $\mcA'$ as follows: Take any local order $\prec$ on $\mcA$ (of course, at least one exists), and then for any $x$ in $B$ or $C$, say that $\preceq(x,y,z)$ if and only if $\preceq(x,y,z)$ in $\mcB'$ or $\mcC'$ respectively. Neighborhood-boundedness is definitional, as $O(x,y,z)$ holding implies that no two of $x$, $y$, and $z$ have distance greater than $2$ in $\mcG(\mcA)$.
\end{proof}

\begin{lemma}\label{lem:defhanf}
    Let $\mfJ$ be a boolean query on $\mfF_\sigma^d$. Then $\mfJ$ is Hanf $r,t$-threshold local if and only if it is elementary.
\end{lemma}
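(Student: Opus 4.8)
The plan is to prove the two implications separately. The direction ``elementary $\Rightarrow$ Hanf threshold local'' is immediate: if $\mfJ$ is elementary it is the class of models of some first-order sentence, and \autoref{thm:hanflocalitytheorem} supplies $r,t$ witnessing that it is Hanf $r,t$-threshold local as a query on $\mfF_\sigma$; restricting the universally quantified witnesses $\mcA,\mcB$ to $\mfF_\sigma^d$ preserves the implication, so $\mfJ$ is Hanf $r,t$-threshold local on $\mfF_\sigma^d$. (One may harmlessly take $r\geq 1$, so this restriction does not even lose anything: $\approx_{r,t}$-equivalence with $r\geq 1$ to a structure of degree at most $d$ already forces degree at most $d$.) All the content is in the converse.

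So suppose $\mfJ$ is Hanf $r,t$-threshold local. The key observation, and the only place bounded degree is used, is that on $\mfF_\sigma^d$ there are only finitely many isomorphism types of $r$-neighborhoods: for any $\mcA\in\mfF_\sigma^d$ and any $a$, the set $B_r^\mcA(a)$ has size at most $1+d+d(d-1)+\cdots+d(d-1)^{r-1}$, and since $\sigma$ is finite and relational there are only finitely many $\sigma\cup\{c\}$-structures up to isomorphism on a set of any fixed size. Enumerate these types as $\tau_1,\dots,\tau_m$. An $\approx_{r,t}$-equivalence class of $\mfF_\sigma^d$ is then completely determined by the tuple $\bigl(\min(|\{a\in\mcA:\tp_r^\mcA(a)=\tau_i\}|,t)\bigr)_{i=1}^m\in\{0,1,\dots,t\}^m$, so there are at most $(t+1)^m$ such classes. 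Being Hanf $r,t$-threshold local, $\mfJ$ is a union of some of them, hence of finitely many. It therefore suffices to show that each individual $\approx_{r,t}$-class is elementary on $\mfF_\sigma^d$, since a finite union of elementary queries is elementary.

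To show a single $\approx_{r,t}$-class is elementary, I would first build, for each type $\tau_i$, a first-order formula $\theta_{\tau_i}(x)$ expressing $\tp_r^\mcA(x)=\tau_i$: using the ball-defining formulas $\beta_r(x,y)$ constructed in the proof of \autoref{lem:elempres}, one quantifies over the boundedly many $y$ with $\beta_r(x,y)$ and writes out, as a finite disjunction over the possible bijections, that the induced $\sigma$-structure on those elements together with the distinguished point $x$ is isomorphic to $\tau_i$. Then for each $i$ and each target value $v_i\in\{0,\dots,t\}$, the statement ``exactly $v_i$ elements satisfy $\theta_{\tau_i}$'' (when $v_i<t$) or ``at least $t$ elements satisfy $\theta_{\tau_i}$'' (when $v_i=t$) is expressible by a counting quantifier, which is first-order; the conjunction of these over $i=1,\dots,m$ defines precisely the $\approx_{r,t}$-class with parameters $(v_1,\dots,v_m)$. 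Disjoining over the finitely many classes contained in $\mfJ$ yields the desired sentence.

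The main obstacle is really just the finiteness-of-neighborhood-types step, which is exactly where bounded degree enters and which reduces the problem to finitely many equivalence classes; without it $\approx_{r,t}$ has infinitely many classes and the lemma is false. The remaining ingredients---first-order definability of a fixed $r$-neighborhood type on bounded-degree structures, and first-order counting up to a threshold---are routine, the former piggybacking on the ball-definability machinery already set up for \autoref{lem:elempres}.
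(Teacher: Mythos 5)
Your proposal is correct and follows essentially the same route as the paper: bounded degree yields finitely many $r$-neighborhood types, each type is first-order definable via the ball-defining formulas, threshold counting up to $t$ is first-order, and $\mfJ$ is a finite disjunction over the resulting $\approx_{r,t}$-classes. The only difference is that you spell out the easy direction (elementary $\Rightarrow$ Hanf local via \autoref{thm:hanflocalitytheorem}), which the paper leaves implicit.
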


\begin{proof}
    Informally, we wish to exploit the fact that each Hanf $r,t$-threshold local equivalence class is definable, and then use the fact that there are finitely many of these. Let $\mfJ\subseteq \mfF_\sigma^d$ be $r,t$-threshold local. It is known that for any structure $\mcA\in \mfF_\sigma$, there is a formula $\varphi_\mcA$ such that for $\mcB\in\mfF_\sigma$, $\mcB\models \varphi_\mcA$ if and only if $\mcB\cong\mcA$ \cite{DBLP:books/sp/Libkin04}. Because the relation $x\in B_r^\mcA(y)$ is definable in $\mcA$, there is similarly a formula $\varphi_\tau$ for every $r$-neighborhood type $\tau$ such that $(\mcA,x)\models \varphi_\tau$ if and only if $\tp_r^\mcA=\tau$. Because of the degree-boundedness of elements of $\mfF_\sigma^d$, there is $g:\omega^2\to\omega$ such that $N_r^\mcA(a)$ has cardinality at most $g(d,r)$. There are finitely many pointed $\sigma$ structures of cardinality $g(d,r)$ up to isomorphism, so there is $n:\omega^2\to\omega$ such that the isomorphism types of $r$-neighborhoods in elements of $\mfF_\sigma^d$ can be enumerated $\tau_1,\dots \tau_{n(d,r)}$. 
    
    For every $\mcA\in \mfF_\sigma^d$, we define the tuple $c_{r,t}(\mcA)\in \{0,\dots t\}^{n(d,r)}$ as $c_{r,t}(\mcA)_i=\min(|\{a\in \mcA:\tp_r^\mcA(a)=\tau\}|,t)$. By Hanf $r,t$-threshold locality, for $\mcA,\mcB\in \mfF_\sigma^d$, $c_{r,t}(\mcA)=c_{r,t}(\mcB)$ implies $\mcA\in\mfJ\Leftrightarrow \mcB\in\mfJ$. Denote by $T_{\vec m,r,t}$ the set $\{\mcA\in\mfF_\sigma^d:c_{r,t}(\mcA)=\vec m\}$. We know that for every $\vec m$, $T_{\vec m}\subseteq \mfJ$ or $T_{\vec m}\cap \mfJ=\emptyset$. Let $T_\mfJ$ denote $\{\vec m:T_{\vec m\subseteq \mfJ}\}$. We can define $\varphi_{i,\tau}$ which for $i\lneq t$ says that there are exactly $i$ elements of $\mcA$ satisfying $\varphi_\tau$, and for $i=t$ says that there are at least $t$ elements of $\mcA$s satisfying $\varphi_\tau$. The following formula defines $\mfJ$:
    \begin{align*}
        \theta_\mfJ= \bigvee_{\vec m\in T_\mfJ} \bigwedge_{\substack{j\leq t \\ k\leq n(d,r)}} \varphi_{m_j,\tau_{k}}.
    \end{align*}
\end{proof}

Putting these puzzle pieces together is not very difficult, yielding the main theorem of the section:

\begin{theorem}\label{thm:localordercollapse}
    Every elementary $\OO$-invariant Boolean query on $\mfF_\sigma^d$ is first-order definable.
\end{theorem}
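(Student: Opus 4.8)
The plan is to assemble the three lemmas of this section with \autoref{thm:localinvhanf}, each of which was tailored to supply one hypothesis. First I would fix an elementary $\OO$-invariant Boolean query $\mfJ$ on $\mfF_\sigma^d$. To invoke \autoref{thm:localinvhanf} one needs a local, neighborhood-bounded, elementary $\preceq$-presentation scheme \emph{for $\mfF_\sigma^d$}, so I would work with the restriction $\OO^d=\{(\mcA,\preceq)\in\OO:\mcA\in\mfF_\sigma^d\}$. This is a $\preceq$-presentation scheme for $\mfF_\sigma^d$ because every $\mcA\in\mfF_\sigma^d$ admits at least one local order (linearly order the finite neighbor set of each vertex), and such a local order lies in $\OO^d$.

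The substantive (but routine) step is to transfer elementarity, locality, and neighborhood-boundedness from $\mfF_\sigma$ down to $\mfF_\sigma^d$. For elementarity, \autoref{lem:elempres} provides a first-order sentence defining $\OO$ inside $\{\mcA':\mcA'\upharpoonright_\sigma\in\mfF_\sigma\}$; since having Gaifman degree at most $d$ is itself first-order expressible via the edge formula $\eta(x,y)$ from the proof of \autoref{lem:elempres} (namely $\forall x\,\forall y_0\cdots y_d\,(\bigwedge_i \eta(x,y_i)\to\bigvee_{i<j}y_i=y_j)$), conjoining the two gives a first-order definition of $\OO^d$ inside $\{\mcA':\mcA'\upharpoonright_\sigma\in\mfF_\sigma^d\}$. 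For locality and neighborhood-boundedness, the proofs in \autoref{lem:localpres} relativize verbatim: $\mfF_\sigma^d$ is closed under substructures (deleting an element cannot raise any Gaifman degree), so the localization and disjoint-local-amalgamation clauses remain meaningful, the amalgam $\mcA'$ constructed there has $\sigma$-reduct $\mcA\in\mfF_\sigma^d$ and hence again lies in $\OO^d$, and the radius witness $r_\PP=2$ is unaffected.

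With $\OO^d$ verified to be a local, neighborhood-bounded, elementary $\preceq$-presentation scheme for $\mfF_\sigma^d$, \autoref{thm:localinvhanf} applies and yields $r,t\in\omega$ for which $\mfJ$ is Hanf $r,t$-threshold local. Finally, \autoref{lem:defhanf} turns Hanf $r,t$-threshold locality on $\mfF_\sigma^d$ into first-order definability, which is exactly the conclusion. I do not expect a genuine obstacle here: the one place needing attention is the relativization of the presentation-scheme hypotheses to the bounded-degree class, and that goes through precisely because bounded degree is first-order expressible and preserved under substructures.
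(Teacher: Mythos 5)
Your proposal is correct and follows exactly the paper's argument: combine \autoref{lem:elempres} and \autoref{lem:localpres} to satisfy the hypotheses of \autoref{thm:localinvhanf}, then finish with \autoref{lem:defhanf}. The extra care you take in restricting $\OO$ to $\mfF_\sigma^d$ and checking that elementarity, locality, and neighborhood-boundedness relativize is a point the paper handles only implicitly (it proves the lemmas over all of $\mfF_\sigma$ and remarks that this is stronger), so your version is a slightly more scrupulous rendering of the same proof.
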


\begin{proof}
    By \autoref{lem:elempres} and \autoref{lem:localpres}, we know that \autoref{thm:localinvhanf} applies when $\PP=\OO$. This tells us that elementary $\OO$-invariant Boolean queries on $\mfF_\sigma^d$ are Hanf-local. Finally, by \autoref{lem:defhanf}, we conclude that elementary $\OO$-invariant Boolean queries on $\mfF_\sigma^d$ are first-order definable.
\end{proof}

In fact, this generalizes to queries on any subclass of $\mfF_\sigma^d$, and we will sometimes use this full generality later.

\section{Interesting Corollaries}

In \cite{dawar_et_al:DagRep.7.9.1}, Weinstein asked a number of questions about invariant definability on various classes of structures, including certain infinite ones. We let $\mfK_\sigma$ be the structures of any cardinality with vocabulary $\sigma$. We say that $\mcA\in\mfK_\sigma$ is locally finite if for every $a\in\mcA$, $B_r^\mcA(a)$ is finite. Let $\mfK_\sigma^{<\omega}\subseteq \mfK_\sigma$ denote the class of locally finite $\sigma$ structures. Weinstein's questions were the following: 

\begin{quote}
    Let $\PP$ be one of $\OO$ and $\LL$, and let $\mfK$ be one of $\mfF_\sigma,\mfF_\sigma^d,\mfK_\sigma^{<\omega}$. Is every $\PP$-invariant elementary Boolean query on $\mfK$ elementary on $\mfK$?
\end{quote}

As Weinstein noted, the answer is known when $\PP=\LL$ and $\mfK=\mfF_\sigma$: $\LL$-invariant elementary Boolean queries are strictly more expressive than elementary ones when $\sigma$ contains at least one relation of arity at least 2 \cite{DBLP:books/aw/AbiteboulHV95}, and is not more expressive when $\sigma$ contains only unary relations. The rest of Weinstein's questions have previously been unanswered. In the previous section we resolved the case $\PP=\OO$ and $\mfK=\mfF_\sigma^d$. The remaining four cases will be resolved in this section. We begin with a lemma which (along with its variants) will be used throughout the rest of the article.

\begin{lemma}\label{lem:elemcapt}
    Let $\PP$ and $\PP'$ be $R$- and $S$-presentation schemes on $\mfK\subseteq \mfF_\sigma$ respectively. If there exist first-order formulae $\varphi(\vec x,\vec y)$ and $\varphi'(\vec x)$ in the language of $\sigma\cup \{R\}$ with the length of $\vec y$ the same as the arity of $S$ such that for every $\mcA$ in $\mfK$, $\mcA'$ in $\PP$ such that $\mcA'\PP \mcA$, and tuple $\vec a$ of elements of $\mcA$ of the same length as $\vec x$ such that $\varphi'(\vec a)$ holds in $\mcA'$, $\varphi(\vec a,\vec y)$ defines a relation $S_\varphi$ on $\mcA$ such that $(\mcA,S_\varphi)\PP' \mcA$, then every $\PP'$-invariant elementary Boolean query over $\mfK$ is a $\PP$-invariant elementary Boolean query over $\mfK$. 
\end{lemma}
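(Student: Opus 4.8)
The plan is to take a $\PP'$-invariant sentence $\psi$ over $\sigma\cup\{S\}$ witnessing that $\mfJ$ is $\PP'$-invariant elementary, replace in it every occurrence of $S$ by its $\varphi$-definition, and existentially quantify over the parameters $\vec x$ — which $\varphi'$ constrains tightly enough that $\PP'$-invariance of $\psi$ absorbs any residual ambiguity. This produces a sentence $\hat\psi$ over $\sigma\cup\{R\}$ which I claim is $\PP$-invariant over $\mfK$ and $\PP$-defines the same query $\mfJ$.

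In detail, I would fix $\psi$ over $\sigma\cup\{S\}$, $\PP'$-invariant over $\mfK$, with $\mcA\in\mfJ$ iff some (equivalently, by invariance, every) $\PP'$-presentation of $\mcA$ models $\psi$. After renaming the bound variables of $\psi$ so that they avoid $\vec x$ and the variables of $\varphi$, and renaming the bound variables of $\varphi$ to be fresh at each insertion point, I define $\psi^\varphi(\vec x)$ by replacing each atomic subformula $S(t_1,\dots,t_k)$ of $\psi$ (here $k$ is the arity of $S$ and each $t_i$ is a variable, as $\sigma$ is relational) by $\varphi(\vec x,t_1,\dots,t_k)$; the only free variables of $\psi^\varphi$ are then $\vec x$. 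A routine induction on $\psi$ establishes the relativization identity: for every $\sigma\cup\{R\}$-structure $\mcM$ and every tuple $\vec a$ from $\mcM$ of length $|\vec x|$,
\[
\mcM\models\psi^\varphi(\vec a)\quad\Longleftrightarrow\quad\bigl(\mcM\upharpoonright_\sigma,\ \{\vec b:\mcM\models\varphi(\vec a,\vec b)\}\bigr)\models\psi .
\]
I then set $\hat\psi := \exists\vec x\,\bigl(\varphi'(\vec x)\wedge\psi^\varphi(\vec x)\bigr)$, a sentence over $\sigma\cup\{R\}$.

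It suffices to show that for every $\mcA\in\mfK$ and every $\mcA'$ with $\mcA'\PP\mcA$ we have $\mcA'\models\hat\psi$ if and only if $\mcA\in\mfJ$: since the right-hand side does not depend on $\mcA'$ and, as $\PP$ presents $\mfK$, at least one such $\mcA'$ exists, this gives both that $\hat\psi$ is $\PP$-invariant over $\mfK$ and that it $\PP$-defines $\mfJ$. For the forward direction, if $\mcA'\models\hat\psi$ then there is $\vec a$ with $\mcA'\models\varphi'(\vec a)$ and $\mcA'\models\psi^\varphi(\vec a)$; by the hypothesis of the lemma the relation $S_\varphi := \{\vec b:\mcA'\models\varphi(\vec a,\vec b)\}$ satisfies $(\mcA,S_\varphi)\PP'\mcA$, and the relativization identity gives $(\mcA,S_\varphi)\models\psi$, so $\mcA\in\mfJ$. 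For the converse, if $\mcA\in\mfJ$ then every $\PP'$-presentation of $\mcA$ models $\psi$; choosing any $\vec a$ with $\mcA'\models\varphi'(\vec a)$, the structure $(\mcA,\{\vec b:\mcA'\models\varphi(\vec a,\vec b)\})$ is such a presentation, hence models $\psi$, and the relativization identity gives $\mcA'\models\psi^\varphi(\vec a)$, that is $\mcA'\models\hat\psi$.

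I expect the only genuinely delicate point to be a bookkeeping one: the argument above uses, in both directions, a tuple $\vec a$ with $\mcA'\models\varphi'(\vec a)$ for an \emph{arbitrary} $\PP$-presentation $\mcA'$ of $\mcA$, whereas the displayed hypothesis is vacuously true over presentations in which $\varphi'$ is unsatisfiable. So to make the argument go through one must also assume that $\varphi'$ is realised in every $\mcA'\in\PP$ with $\mcA'\PP\mcA$, for $\mcA\in\mfK$; this I would read as implicit in the hypothesis (or add it as a standing assumption). In the applications it is immediate — typically $\varphi'$ merely records a coherence condition met by every $\PP$-presentation, and in some cases $\varphi'$ is simply $\vec x=\vec x$ — so it causes no real difficulty. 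The remaining ingredients, the relativization identity and the appeal to $\PP'$-invariance of $\psi$, are entirely routine.
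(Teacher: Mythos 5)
Your proposal is correct and follows essentially the same route as the paper: substitute $\varphi(\vec x,\cdot)$ for the atomic occurrences of $S$ in the $\PP'$-invariant sentence and then quantify over the parameter tuple guarded by $\varphi'$, the only cosmetic difference being that you use $\exists\vec x(\varphi'(\vec x)\wedge\cdots)$ where the paper uses $\forall\vec x(\varphi'(\vec x)\rightarrow\cdots)$, which are equivalent here by $\PP'$-invariance once $\varphi'$ is realised. Your observation that the hypothesis must implicitly guarantee $\varphi'$ is realised in every $\PP$-presentation is a fair reading of the lemma (the paper's universally quantified version has the dual defect of being vacuously true otherwise) and matches how the lemma is used in all applications.
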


\begin{proof}
    Roughly what we wish to do is take any $\PP'$-invariant sentence $\theta$ in the language of $\sigma\cup\{R\}$ and replace the occurrences of $R(\vec x)$ with $\varphi(\vec x,\vec y)$. Because $\varphi(\vec x,\vec y)$ defines a $\PP'$ structure on every $\mcA$, then $\mcA'\in \PP$ will model this modified sentence if and only if $\mcA'\upharpoonright_\sigma$ is in the invariant query which $\theta$ defines. The full construction of this modified version of $\theta$ is shown in \autoref{proof:elemcapt}. 
\end{proof}

If $\PP'$ is additionally elementary, then we only need that there exists a tuple $\vec a$ such that the relation $\varphi(\vec a,\vec y)$ induces a $\PP'$ presentation of $\mcA$.

\begin{corollary}
    If $\PP$ and $\PP'$ are $R$-presentation schemes on $\mfK\subseteq \mfF_\sigma$ such that $\PP\subseteq \PP'$, then every $\PP'$-invariant Boolean query is a $\PP$-invariant Boolean query.
\end{corollary}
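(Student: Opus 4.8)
The plan is to obtain this as the degenerate case of \autoref{lem:elemcapt}, in which the interpreting formula is just $R$ itself. Since $\PP$ and $\PP'$ are both $R$-presentation schemes, we take $S=R$, let $\vec x$ be the empty tuple, set $\varphi(\vec y):=R(\vec y)$ where $\vec y$ has length equal to the arity of $R$, and let $\varphi'$ be any first-order tautology, e.g.\ $\exists z\,(z=z)$. With these choices the hypothesis of \autoref{lem:elemcapt} becomes trivial: given $\mcA\in\mfK$ and $\mcA'\in\PP$ with $\mcA'\PP\mcA$, the relation defined on $\mcA'$ by $\varphi(\vec y)=R(\vec y)$ is literally the interpretation $R^{\mcA'}$, so the structure $(\mcA,R^{\mcA'})$ is just $\mcA'$ itself. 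By hypothesis $\PP\subseteq\PP'$, so $\mcA'\in\PP'$; and $\mcA'\upharpoonright_\sigma=\mcA$ because $\mcA'\PP\mcA$. Hence $\mcA'\PP'\mcA$, which is exactly the condition $(\mcA,S_\varphi)\PP'\mcA$ demanded by the lemma. (The side condition that $\varphi'(\vec a)$ hold is vacuous, since $\vec x$ is empty and $\varphi'$ is valid.)

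Invoking \autoref{lem:elemcapt} with these data then yields immediately that every $\PP'$-invariant elementary Boolean query over $\mfK$ is a $\PP$-invariant elementary Boolean query over $\mfK$, which is the statement of the corollary. Note we use the lemma in its full form rather than the elementary-case remark following it, since $\PP'$ need not be elementary here; no such weakening is needed because the full hypothesis already holds on the nose.

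I do not anticipate any real obstacle here: the corollary is essentially a sanity check confirming that \autoref{lem:elemcapt} specializes correctly when $\varphi=R$. The only points requiring a moment's bookkeeping are that the lemma permits an empty parameter tuple $\vec x$ and an always-true $\varphi'$, and that the expression $(\mcA,S_\varphi)$ is well-formed when $S_\varphi$ is taken to be $R^{\mcA'}$ — both of which are immediate from the statement of the lemma.
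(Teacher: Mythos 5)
Your proposal is correct and is exactly the intended argument: the paper gives no explicit proof for this corollary, but it is plainly meant as the degenerate instantiation of \autoref{lem:elemcapt} with $\varphi(\vec y)=R(\vec y)$, an empty parameter tuple, and a tautological $\varphi'$, so that $S_\varphi=R^{\mcA'}$ and $(\mcA,S_\varphi)=\mcA'\in\PP\subseteq\PP'$. Your bookkeeping of the side conditions matches what the lemma requires.
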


\autoref{lem:elemcapt} shows, in particular, that for any $\PP'$ and $\PP$ satisfying the hypotheses where $\PP$ is a presentation scheme which adds nothing to the structures (e.g., one in which $R(\vec x)$ is always false), then every $\PP'$-invariant elementary Boolean query on $\mfK$ is elementary. We now detail several applications of this lemma.

\begin{example}
    Our first application is one that has been documented several times. Define a circular successor relation on a structure $\mcA$ to be the graph of a permutation of the elements of $\mcA$ with a single orbit. Define a linear successor relation to be a successor relation induced by some linear order on $\mcA$. Denote by $\SS_1$ the class of structures of the form $(\mcA,S_1)$ with $\mcA$ an element of $\mfF_\sigma$ and $S_1$ a circular successor on $\mcA$, and by $\SS'$ denote the class defined in the same way except with $S'S$ a linear successor on $\mcA$. Note that neither of these are elementary presentation schemes. We claim that the $\SS_1$-invariant elementary queries are exactly the $\SS'$-invariant elementary queries, which we prove by defining each in terms of the other and applying \autoref{lem:elemcapt}. First, for any element $a$ of $\mcA\in\mfF_\sigma$ and any circular successor $S_1$ on $\mcA$, the formula
        \begin{align*}
            \varphi_1(a,x,y)\equiv y\neq a\wedge S_1(x,y))
        \end{align*}
        defines a linear successor, as it is simply taking out the relationship  $S_1(a',a)$ where $a'$ is the $S_1$ predecessor of $a$. Similarly, for any $\mcA$ with linear successor $S'$, and for any tuple $(a,a')$ satisfying
        \begin{align*}
            \varphi_2'(a,a')\equiv (\forall x)\neg S'(a,x)\wedge \neg S'(x,a)
        \end{align*}
        (which says that $a$ is the minimum element of the corresponding linear order and $a'$ is the maximum one), the following formula defines a circular successor:
        \begin{align*}
            \varphi_2(a,a',x,y)\equiv (x=a'\wedge y=a)\vee S'(x,y).
        \end{align*}
        This works because we are simply adding the same relationship we removed in $\varphi_1$.
\end{example}

\begin{example}
    Another class which has been considered is $\SS$, the $S$-presentations for which $S$ is the graph of a permutation (as opposed to one with specifically one orbit). In any structure, the formula 
        \begin{align*}
            \varphi(x,y)\equiv x=x
        \end{align*}
        defines a permutation (the identity). Thus, by \autoref{lem:elemcapt}, the $\SS$-invariant elementary queries are elementary.
\end{example}

\begin{example}\label{ex:applications}
    We now come to an example which has not been stated before, but which has been mentioned. Let $\SS_k$ be the class of $S_k$ presentations of the form $(\mcA,S_k)$, where $S_k$ is a permutation of $\mcA$ with at most $k$ orbits. Let $\DD$ be the class of $D$-presentations where $D$ is a permutation without fixed points. Note that $\DD$ is elementary and $\SS_k$ is not. Then every $\DD$-invariant elementary boolean query is $\SS_k$-invariant elementary. We leave the full constructions of the relevant formulae for \autoref{proof:applications}, but suffice it to say that if every point is a fixed point of $S_k$, then we create a cycle out of them, and otherwise, we take some non-fixed point $b$, its successor $b'$, and insert each of the fixed points into the same cycle as $b$ between $b$ and $b'$.
\end{example}

The following definition will be used to establish a result about the relationships between the expressive powers of $\OO$-invariant elementary and $\LL$-invariant elementary Boolean queries on classes not of bounded degree. In fact, we are roughly defining the opposite of classes of bounded degree:

\begin{definition}
    We say that a class $\mfK\subseteq \mfF_\sigma$ is of bounded diameter if there exists natural $d$ such that for every $\mcA$ in $\mfK$, $\mcG(\mcA)$ has, as a graph, diameter at most $d$.
\end{definition}

We note, in fact, that every known separating example for invariant logics (contained in order-invariant logic) is of bounded diameter. This includes that of Gurevich separating order-invariant logic from first-order logic in \cite{DBLP:books/aw/AbiteboulHV95}, that of Otto separating epsilon-invariant logic in \cite{DBLP:journals/jsyml/Otto00}, and those of Rossman separating successor-invariant logic and first-order logic, successor-invariant and epsilon-invariant logic, and successor-invariant and order-invariant logic in \cite{DBLP:journals/jsyml/Rossman07}. We show that a similar separating example cannot be used to separate order-invariant logic from order-invariant logic in this lemma:

\begin{lemma}\label{lem:bounddiamcapt}
    If $\mfK\subseteq \mfF_\sigma$ is of bounded diameter, then for every $\mfJ\subseteq \mfK$, $\mfJ$ is a $\LL$-invariant elementary Boolean query if and only if it is an $\OO$-invariant elementary Boolean query.
\end{lemma}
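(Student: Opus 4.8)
The plan is to apply \autoref{lem:elemcapt} in both directions, reducing the biconditional to two first-order definability tasks: (i) from a linear order $\leq$ on $\mcA$, define a local order in the vocabulary $\sigma\cup\{\leq\}$; and (ii) from a local order $\preceq$ on $\mcA$ together with one parameter, define a linear order in the vocabulary $\sigma\cup\{\preceq\}$. Both $\LL$ and $\OO$ are elementary presentation schemes — the former as remarked earlier, the latter by \autoref{lem:elempres} — so for task (ii) it suffices, by the remark following \autoref{lem:elemcapt}, to exhibit \emph{some} admissible parameter for which the defining formula genuinely yields a linear order.

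Task (i), which gives the implication ``$\OO$-invariant $\Rightarrow$ $\LL$-invariant'' and needs no hypothesis on $\mfK$, is immediate: letting $\eta(x,y)$ be the formula defining adjacency in $\mcG(\mcA)$ (constructed in the proof of \autoref{lem:elempres}), the formula $\varphi(x,y,z)\equiv \eta(x,y)\wedge\eta(x,z)\wedge y\leq z$ restricts, for each fixed $x$, a linear order to the neighbours of $x$, hence defines a local order; \autoref{lem:elemcapt} with $\PP=\LL$ and $\PP'=\OO$ then applies, with no parameters.

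Task (ii), which gives ``$\LL$-invariant $\Rightarrow$ $\OO$-invariant'', is where the diameter bound enters. If $\mcG(\mcA)$ has diameter at most $d$ then it is connected and all of its distances are at most $d$, so every path-following operation below has bounded length and is first-order expressible by a formula depending on $d$. Fix a root $a\in\mcA$ as the parameter. For $v\neq a$, let $p(v)$ be the $\preceq_v$-least neighbour of $v$ in the sphere $S^\mcA_{\delta^\mcA(a,v)-1}(a)$; this is well defined, since some shortest $a$--$v$ path provides such a neighbour and $\preceq_v$ \emph{linearly} orders all neighbours of $v$. This parent function is first-order definable relative to $a$ and $\preceq$, and it picks out a spanning BFS-tree $T$ rooted at $a$ of depth at most $d$; the $i$-th ancestor $\mathrm{anc}_i(v)$ along $T$ (for $i\leq\delta^\mcA(a,v)\leq d$) is then definable for each fixed $i$ by existentially quantifying the at most $d$ intermediate vertices obtained by iterating $p$. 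The linear order $\varphi(a,u,v)$ is the depth-first pre-order of $T$ in which the children of each vertex $w$ are ordered by $\preceq_w$: letting $j$ be the largest index with $\mathrm{anc}_j(u)=\mathrm{anc}_j(v)=:w$, declare $u<v$ if $u$ is a proper $T$-ancestor of $v$, declare $v<u$ if $v$ is a proper $T$-ancestor of $u$, and otherwise compare the distinct children $\mathrm{anc}_{j+1}(u)$ and $\mathrm{anc}_{j+1}(v)$ of $w$ via $\preceq_w$. Since $j$, the relevant distances, and the ancestor functions all range over values $\leq d$, this is a finite disjunction of first-order formulas, and it defines a total order because the depth-first pre-order of any finite rooted tree is linear. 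Taking $\vec x=(a)$ and $\varphi'(\vec x)\equiv \vec x=\vec x$ (every element is an admissible root), \autoref{lem:elemcapt} with $\PP=\OO$ and $\PP'=\LL$ yields the remaining containment, and (i) together with (ii) proves the lemma.

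I expect the heart of the argument to be exactly this ``global order from local order'' construction, and the subtlety to overcome is that the natural BFS/DFS definition is recursive, hence not first-order in general; the diameter bound is precisely what collapses each unbounded recursion into a bounded-length, first-order expressible traversal. A smaller point to handle with care is that $p$ must be a genuine function, which is where one needs $\preceq_v$ to be a \emph{linear} (total) order on the neighbours of $v$, so that the non-empty set of candidate parents has a unique $\preceq_v$-minimum.
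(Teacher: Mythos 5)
Your proposal is correct and takes essentially the same approach as the paper: both directions are reduced to \autoref{lem:elemcapt}, the forward direction uses the identical formula $\eta(x,y)\wedge\eta(x,z)\wedge y\leq z$, and the backward direction uses the diameter bound to unroll a rooted traversal of depth at most $d$ into a first-order definable global order with the root as parameter. The only (immaterial) difference is the concrete order constructed: the paper places the spheres $S_1^{\mcA}(a),\dots,S_d^{\mcA}(a)$ consecutively and compares within a sphere via the least already-ordered neighbour and then the local order there, whereas you take the depth-first pre-order of the $\preceq$-least-parent BFS tree; both yield a valid linear order for every choice of root.
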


\begin{proof}
    We proceed so as to use \autoref{lem:elemcapt} for both directions. That is, we will define a local order from an arbitrary linear order, and we will define with parameters a linear order from an arbitrary local order.
    \begin{enumerate}
        \item[$(\Rightarrow)$] This direction is simpler. Consider the following formula in the language $\sigma\cup\{\leq\}$:
        \begin{align*}
            \varphi_\preceq(x,y,z)\equiv \eta(x,y)\wedge \eta(x,z)\wedge y\leq z.
        \end{align*}
        This says, roughly, that $y$ and $z$ are neighbors of $x$ and that $y$ precedes $z$ in the linear ordering. By similar logic to the proof of \autoref{lem:localpres}, the restriction of a linear ordering to the neighbors of a point yields a linear ordering on the neighbors of the point, so this defines a local ordering on $\mcA\in\mfF_\sigma$ for any $\mcA'=(\mcA,\leq)$ in $\LL$. Thus, \autoref{lem:elemcapt} gives us the first implication.
        \item[$(\Leftarrow)$] We will define inductively, using a local ordering, a linear ordering on $B_r^\mcA(x)$ for every $x$. Define the formula $\lambda_1(x,y,z)\equiv (x=y\wedge \beta_1(z))\vee \preceq(x,y,z)$, which will be our ordering of radius 1. We will now define, from an ordering $\lambda(x,y,z)$ of $B_r^\mcA(x)$, an ordering $\lambda_{r+1}(x,y,z)$ of $B_{r+1}^\mcA(x)$. First, we make $B_r^\mcA(x)$ an initial segment of $\lambda_r+1$. Second, we define a preorder $\lambda'_{r+1}(x,y,z)$ on $S_r^\mcA(x)$ by ordering based on the least elements (with respect to $\lambda_r$) of $B_r^\mcA(x)$ that $y$ and $z$ are connected to in $\mcG(\mcA)$ respectively. The equivalence relation corresponding to this preorder is being connected to the same least element. If for $y$ and $z$ this least element is $b$, then we order $y$ and $z$ in $\lambda_{r+1}(x,y,z)$ via $\lambda_1(b,y,z)$. Finally, we place this ordered $S_{r+1}^\mcA(x)$ entirely after the initial segment $B_r^\mcA(x)$ from earlier.

        The full first-order definitions of these are in \autoref{proof:bounddiamcapt}, but once they are established, we apply \autoref{lem:elemcapt}.
    \end{enumerate}
\end{proof}

Below we describe the specific separating example of $\LL$-invariant elementary definability from elementary definability on arbitrary structures, due to Gurevich but first published in \cite{DBLP:books/aw/AbiteboulHV95}.

\begin{theorem}[Gurevich]\label{thm:evenbool}
    Let $\sigma_{BA}$ denote the vocabulary $\langle \subseteq \rangle$, the vocabulary of Boolean algebras. Let $\mfJ_{2}$ be the Boolean query consisting of elements of $\mfF_{\sigma_{BA}}$ which satisfy the axioms of a Boolean algebra and have an even number of atoms. Then $\mfJ_2$ is $\LL$-invariant elementary but not elementary on $\mfF_{\sigma_{BA}}$.
\end{theorem}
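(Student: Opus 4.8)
The plan is to establish the two halves of the theorem separately: first that $\mfJ_2$ is $\LL$-invariant elementary, and second that it is not elementary. For the first part, I would exhibit an explicit first-order sentence $\varphi$ in the language $\sigma_{BA}\cup\{\leq\}$ that is $\LL$-invariant and captures $\mfJ_2$. The idea is that a linear order $\leq$ lets us pick out a canonical ``odd/even'' marking of the atoms: define first-order formulae identifying whether an element is an atom (a minimal nonzero element under $\subseteq$), then use $\leq$ to enumerate the atoms in increasing order, and write a sentence asserting parity of this enumeration. The standard trick is to say that the set of atoms can be partitioned into a ``matched'' relation induced by $\leq$ (each atom paired with the next atom in $\leq$-order), with no atom left over; expressing ``the number of atoms is even'' via ``there is a definable involution on atoms with no fixed point'' is cleaner, and the involution (pair the $\leq$-least unpaired atom with the next one, recursively) is first-order definable from $\leq$ by a bounded formula since we only need ``$x$ and $y$ are $\leq$-consecutive atoms and the number of atoms $\leq x$ is odd,'' which is itself $\leq$-definable. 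I must then check invariance: the truth value of ``even number of atoms'' genuinely does not depend on $\leq$, so any sentence correctly computing it is automatically $\LL$-invariant over the class of Boolean algebras — but one must be careful to also specify behavior on structures failing the Boolean-algebra axioms (just conjoin the axioms, and since those axioms are $\sigma_{BA}$-only they are trivially order-independent).

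For the non-elementarity half, I would use a Hanf-locality / Ehrenfeucht–Fraïssé argument, which is the expected main obstacle and the technically substantive part. Given $r,t$, I need Boolean algebras $\mcA$ and $\mcB$ with $\mcA\approx_{r,t}\mcB$ but differing parity of atoms. Take $\mcA$ to be the finite Boolean algebra on $n$ atoms and $\mcB$ the one on $n+1$ atoms, for $n$ large relative to $r$ and $t$. The key observation is that in a finite Boolean algebra the Gaifman graph of $\subseteq$ has diameter $2$ (every element is $\subseteq$-comparable to $0$ and $1$, or more directly any two elements $x,y$ are both comparable to $x\vee y$... actually under the bare relation $\subseteq$ one checks the graph is connected of small diameter), so $r$-neighborhoods are essentially the whole structure and Hanf locality at small radius does \emph{not} immediately separate them — this is exactly why I should \emph{not} try a naive neighborhood count, and instead the cleaner route is a direct $m$-round EF game showing $\mcA\equiv_m\mcB$ whenever $n\geq 2^m$ (a classical fact: finite Boolean algebras on $\geq 2^m$ atoms are $m$-elementarily equivalent), which already contradicts elementarity of $\mfJ_2$ without invoking Hanf locality at all. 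So the honest proof is: finite Boolean algebras with sufficiently many atoms satisfy the same first-order sentences of quantifier rank $m$, via a Duplicator strategy in the EF game that maintains a partial isomorphism tracking, for each atom-count ``coordinate,'' only min-with-threshold $2^{m-\text{round}}$ information about the number of atoms below each played element.

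Thus the concrete steps in order are: (i) write the $\LL$-invariant sentence and verify it captures $\mfJ_2$ and is invariant; (ii) recall/restate the EF-equivalence of large finite Boolean algebras, $\mcA_n\equiv_m\mcB_{n+1}$ for $n\geq 2^m$, with a brief description of Duplicator's strategy (partial isomorphisms between finite subalgebras, extended by choosing, below each atom of the generated subalgebra, a matching count of $\mcA$-atoms and $\mcB$-atoms up to the relevant threshold); (iii) conclude that no first-order $\varphi$ can define $\mfJ_2$, since for its quantifier rank $m$ we get $\mcA_{2^m}\models\varphi \iff \mcB_{2^m+1}\models\varphi$ while exactly one lies in $\mfJ_2$. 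I expect step (ii) — getting the Duplicator strategy and the threshold bookkeeping exactly right — to be the main obstacle, though it is a well-documented classical computation (it appears in Libkin's book, already cited as \cite{DBLP:books/sp/Libkin04}), so I would cite it and sketch rather than belabor it; and I should double-check in step (i) that ``even number of atoms'' on structures that happen to be \emph{infinite} Boolean algebras is handled (the query is over $\mfF_{\sigma_{BA}}$, so all structures are finite and this is moot).
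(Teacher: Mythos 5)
The paper does not actually prove this statement; it is quoted as a known result of Gurevich with a citation to the Abiteboul--Hull--Vianu book, so there is no internal proof to compare against. Your overall architecture (prove $\LL$-invariant elementarity by an explicit order-invariant sentence, prove non-elementarity by the classical Ehrenfeucht--Fra\"iss\'e equivalence of large finite Boolean algebras rather than by Hanf locality, which you correctly observe is useless here because the Gaifman graph has diameter $2$) is the right one, and your step (ii)--(iii) is the standard, correct argument.

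However, step (i) as written has a genuine gap at exactly the point that makes Gurevich's example nontrivial. You define the atom-pairing involution by appealing to the predicate ``the number of atoms $\leq x$ is odd'' and assert that this is ``itself $\leq$-definable.'' It is not: parity of position in a linear order is the canonical example of a query that is \emph{not} first-order definable even with access to the order (this is the EVEN query on linear orders), and as stated your construction is circular --- to define the pairing you need the parity of the position, and the parity of the position is what the pairing was supposed to give you. The claim can be rescued, but only by using the Boolean algebra structure in an essential way, which is the actual content of Gurevich's trick: existentially quantify an element $u$ of the algebra and assert, in first-order logic with $\leq$, that the $\leq$-least atom lies below $u$ and that each atom lies below $u$ if and only if its $\leq$-predecessor atom does not. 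Such a $u$ exists (it is the join of the atoms in odd positions), and any such $u$ determines the parity: the number of atoms is even if and only if the $\leq$-greatest atom does \emph{not} lie below $u$. Your writeup never invokes an element of the algebra to code the set of odd-position atoms, so the definability claim on which the positive half rests is unjustified; you should replace the ``involution'' paragraph with this explicit coding. The rest of your proposal (conjoining the $\sigma_{BA}$-only axioms to handle non-algebras, the observation that invariance is automatic once correctness is established, and the EF-game bound for algebras with at least $2^m$ atoms cited from Libkin) is fine.
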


This separating example and our lemma about local-order-invariance on classes of bounded diameter answer three of Weinstein's remaining questions. The next theorem is the case $\PP=\OO$ and $\mfK=\mfF_\sigma$. Weinstein notes that there may be a difference between the $\mfK=\mfF_\sigma$ and $\mfK=\mfF_\sigma^d$ cases, and indeed, there is:

\begin{theorem}\label{thm:unboundeddiamnoncoll}
    There exists a vocabulary $\sigma$ and an $\OO$-invariant elementary Boolean query $\mfJ$ on $\mfF_{\sigma}$ which is not elementary.
\end{theorem}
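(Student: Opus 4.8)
The plan is to recycle Gurevich's example from \autoref{thm:evenbool}: I would take $\sigma=\sigma_{BA}$ and $\mfJ=\mfJ_2$, so that non-elementarity on $\mfF_{\sigma_{BA}}$ comes for free and the whole task is to show that $\mfJ_2$ is $\OO$-invariant elementary on $\mfF_{\sigma_{BA}}$. The key observation is that $\mfJ_2$ lives inside the class $\mathrm{BA}\subseteq\mfF_{\sigma_{BA}}$ of finite models of the Boolean-algebra axioms, and $\mathrm{BA}$ is of bounded diameter: in any $\mcA\in\mathrm{BA}$ the bottom element $0$ satisfies $0\subseteq x$ for every $x$, so $0$ is adjacent to every other vertex of $\mcG(\mcA)$, whence $\mcG(\mcA)$ has diameter at most $2$. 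Thus \autoref{lem:bounddiamcapt} is available on $\mathrm{BA}$, even though it is not available on $\mfF_{\sigma_{BA}}$ itself.

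Concretely, Gurevich's sentence from \autoref{thm:evenbool} is $\LL$-invariant on $\mfF_{\sigma_{BA}}$ and defines $\mfJ_2$ there; since $\mathrm{BA}\subseteq\mfF_{\sigma_{BA}}$ and $\mfJ_2\subseteq\mathrm{BA}$, that same sentence witnesses that $\mfJ_2$ is $\LL$-invariant elementary on the subclass $\mathrm{BA}$. Applying \autoref{lem:bounddiamcapt} with $\mfK=\mathrm{BA}$ yields a sentence $\psi$ over $\sigma_{BA}\cup\{\preceq\}$ that is $\OO$-invariant on $\mathrm{BA}$ and defines $\mfJ_2$ within $\mathrm{BA}$.

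To finish I would push $\psi$ up to all of $\mfF_{\sigma_{BA}}$ by guarding it with the elementary $\sigma_{BA}$-sentence $\mathrm{ax}_{BA}$ whose finite models are exactly the Boolean algebras, setting $\psi^{\ast}:=\mathrm{ax}_{BA}\wedge\psi$. For $\mcA\in\mathrm{BA}$, every $\OO$-presentation of $\mcA$ restricts to $\mcA$ on $\sigma_{BA}$ and hence already models $\mathrm{ax}_{BA}$, so over those presentations $\psi^{\ast}$ and $\psi$ are equivalent and both $\OO$-invariance and the defining equivalence transfer from $\psi$. For $\mcA\notin\mathrm{BA}$, no $\OO$-presentation of $\mcA$ models $\mathrm{ax}_{BA}$, hence none models $\psi^{\ast}$; since $\OO$ is a presentation scheme for $\mfF_{\sigma_{BA}}$ (\autoref{lem:elempres}) at least one such presentation exists, so both the existential and the universal statements over presentations fail, $\psi^{\ast}$ is trivially $\OO$-invariant at $\mcA$, and $\mcA\notin\mfJ_2$ matches. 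Therefore $\psi^{\ast}$ witnesses that $\mfJ_2$ is $\OO$-invariant elementary on $\mfF_{\sigma_{BA}}$, while \autoref{thm:evenbool} already supplies that it is not elementary on $\mfF_{\sigma_{BA}}$.

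The only delicate point I anticipate is this transfer step: \autoref{lem:bounddiamcapt} genuinely cannot be invoked on $\mfF_{\sigma_{BA}}$ directly, since that class has unbounded diameter, so the argument must detour through $\mathrm{BA}$ and then re-expand, and one must verify carefully that conjoining a $\sigma_{BA}$-only axiomatization of $\mathrm{BA}$ neither disturbs $\OO$-invariance on $\mathrm{BA}$ nor creates spurious behaviour off $\mathrm{BA}$ (which is where the fact that $\OO$ is a presentation scheme, so that there is always at least one presentation, is used). Everything else — in particular the diameter bound for Boolean algebras and the inclusion $\mfJ_2\subseteq\mathrm{BA}$ — is routine.
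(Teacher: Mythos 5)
Your proposal is correct and follows essentially the same route as the paper: both use Gurevich's query $\mfJ_2$, observe that finite Boolean algebras have Gaifman diameter at most $2$ so that \autoref{lem:bounddiamcapt} applies on that subclass, and then guard the resulting local-order-invariant sentence with the first-order axiomatization of finite Boolean algebras to lift it to all of $\mfF_{\sigma_{BA}}$. Your explicit verification that the guarded sentence remains $\OO$-invariant off the class of Boolean algebras (using that $\OO$ is a presentation scheme, so some presentation always exists) is a point the paper's proof leaves implicit, but it is the same argument.
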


\begin{proof}
    Let $\mfK_{BA}\subseteq \mfF_{\sigma_{BA}}$ be the collection of finite Boolean algebras. It is well-known that there is a first-order sentence in the language of $\sigma_{BA}$ which defines $\mfK_{BA}$. Let us call this sentence $\varphi_{BA}$. If there were to exist some first-order sentence $\varphi_2$ in the language of $\sigma_{BA}$ which defined $\mfJ_2$ as a Boolean query on $\mfK_{BA}$, then the sentence $\varphi_{BA}\wedge \varphi_2$ would define $\mfJ_2$ on $\mfF_{\sigma_{BA}}$, which is impossible by \autoref{thm:evenbool}. 

    On the other hand, \autoref{thm:evenbool} also tells us that $\mfJ_2$ is $\LL$-invariant elementary as a subset of $\mfF_{\sigma_{BA}}$. Thus, it is definable by some sentence $\varphi_2'$ in the language of $\sigma_{BA}\cup\{\leq\}$ as a subset of $\mfF_{\sigma_{BA}}$, and in particular as a subset of $\mfK_{BA}$. The maximum diameter of a finite Boolean algebra's Gaifman graph is of course 2 (because every element has an edge to $\top$, the maximal element, and $\bot$, the minimal one), so by \autoref{lem:bounddiamcapt}, $\varphi_2'$ is expressible on $\mfK_{BA}$ also as a local-order-invariant sentence in the language of $\sigma_{BA}\cup\{\preceq\}$. Finally, we define the sentence $\varphi_{BA}\wedge \varphi_2'$, which witnesses the $\OO$-invariant elementarity of $\mfJ_2$ as a subset of $\mfF_{\sigma_{BA}}$. This completes the proof.
\end{proof}

Note that all of the definitions of \autoref{sec:prelims} (and about Gaifman graphs in general) can have $\mfF_\sigma^{<\omega}$ replaced by the class $\mfK_\sigma^{<\omega}$ of arbitrary (as opposed to finite) $\sigma$ structures. In the next several results, we will consider these modified definitions instead.

Our next theorem is the case $\PP=\OO$ and $\mfK=\mfK_\sigma^{<\omega}$.

\begin{theorem}
      There exists a vocabulary $\sigma$ and an $\OO$-invariant elementary Boolean query on $\mfK_{\sigma}^{<\omega}$ which is not elementary. 
\end{theorem}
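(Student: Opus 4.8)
I would prove this by transporting Gurevich's separating example into the locally finite setting, exploiting the fact that a Boolean algebra is locally finite only when it is finite. Take $\sigma=\sigma_{BA}$. Since $\mfJ_2\subseteq\mfF_{\sigma_{BA}}\subseteq\mfK_{\sigma_{BA}}^{<\omega}$, the Gurevich query $\mfJ_2$ of \autoref{thm:evenbool} is a legitimate Boolean query on $\mfK_{\sigma_{BA}}^{<\omega}$, and I claim it is $\OO$-invariant elementary but not elementary there. Non-elementarity is immediate: a first-order sentence defining $\mfJ_2$ on $\mfK_{\sigma_{BA}}^{<\omega}$ would, on restriction to the finite structures $\mfF_{\sigma_{BA}}\subseteq\mfK_{\sigma_{BA}}^{<\omega}$, define $\mfJ_2\cap\mfF_{\sigma_{BA}}=\mfJ_2$ on $\mfF_{\sigma_{BA}}$, contradicting \autoref{thm:evenbool}. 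So the content is the positive direction.

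The key steps are as follows. First I would record the structural observation that makes everything work: every Boolean algebra has a maximum element $\top$, which is $\subseteq$-comparable to — hence Gaifman-adjacent to — every element, so $B_1^\mcA(\top)$ is the whole universe; thus a Boolean algebra is locally finite if and only if it is finite. Strengthening $\varphi_{BA}$ to explicitly assert the existence of a maximum element (which does not change which \emph{finite} structures it accepts), I obtain that every locally finite model of $\varphi_{BA}$ is finite, hence lies in $\mfK_{BA}$. Next, by \autoref{lem:bounddiamcapt} applied to the bounded-diameter class $\mfK_{BA}$, together with the $\LL$-invariant witness supplied by \autoref{thm:evenbool}, the query $\mfJ_2$ is $\OO$-invariant elementary on $\mfK_{BA}$; let $\varphi_2''$ in the language of $\sigma_{BA}\cup\{\preceq\}$ be a witnessing sentence, obtained as in the proof of \autoref{thm:unboundeddiamnoncoll}. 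I then set $\Phi\equiv\varphi_{BA}\wedge\varphi_2''$ and verify that it witnesses $\OO$-invariant elementarity of $\mfJ_2$ over $\mfK_{\sigma_{BA}}^{<\omega}$: since each sphere $S_1^\mcA(a)$ of a locally finite structure is finite and hence orderable, every locally finite structure admits a local order; if $\mcA\not\models\varphi_{BA}$ then $\Phi$ is false under every local order on $\mcA$ (this simultaneously handles all infinite locally finite structures), so the truth value is trivially $\preceq$-independent and $\mcA$ belongs to neither the query of $\Phi$ nor $\mfJ_2$; and if $\mcA\models\varphi_{BA}$ then $\mcA\in\mfK_{BA}$, on which $\Phi$ is equivalent to $\varphi_2''$, giving both $\OO$-invariance on $\mcA$ and $(\exists\preceq)\,(\mcA,\preceq)\models\Phi\Leftrightarrow(\exists\preceq)\,(\mcA,\preceq)\models\varphi_2''\Leftrightarrow\mcA\in\mfJ_2$.

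I do not expect a serious obstacle; the single point requiring care is that invariance must now be checked over the \emph{infinite} locally finite structures as well, on which the local-order-to-linear-order translation hidden inside $\varphi_2''$ need not behave sensibly. The conjunct $\varphi_{BA}$, once arranged to force a universally Gaifman-adjacent element, neutralizes this entirely, since no infinite locally finite structure satisfies it; with that in place the verification is a routine repetition of the bounded-diameter argument of \autoref{thm:unboundeddiamnoncoll}.
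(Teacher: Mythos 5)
Your proof is correct and follows essentially the same route as the paper: both hinge on the observation that a locally finite Boolean algebra must be finite (since $\top$ is Gaifman-adjacent to everything) and then reuse the sentence $\varphi_{BA}\wedge\varphi_2'$ from the finite-case theorem, with non-elementarity obtained by restricting to $\mfF_{\sigma_{BA}}$. Your explicit check that invariance still holds over the infinite locally finite structures (all of which falsify $\varphi_{BA}$) is a point the paper glosses over, and is a welcome addition rather than a deviation.
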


\begin{proof}
    The key observation here is that the only locally finite Boolean algebras are the finite ones, because (again) every element has an edge in the Gaifman graph to $\top$ and $\bot$. Therefore, the argument of \autoref{thm:unboundeddiamnoncoll} goes ahead almost identically: $\varphi_{BA}\wedge \varphi_2'$ referenced in the proof defines precisely the same class when applied to $\mfF_{\sigma_{BA}}$ and $\mfK_{\sigma_{BA}}^{<\omega}$, and the same issues with the existence of $\varphi_2$ occur.
\end{proof}

The final question we answer is a corollary of the previous, which is $\PP=\OO$ and $\mfK=\mfK_\sigma^{<\omega}$.

\begin{corollary}
    There exists a vocabulary $\sigma$ and an $\LL$-invariant elementary Boolean query on $\mfK_{\sigma}^{<\omega}$ which is not elementary. 
\end{corollary}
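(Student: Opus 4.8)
The plan is to reuse the Boolean-algebra example once more, observing that the detour through local orders in the proof of the previous theorem is superfluous when only $\LL$-invariance is wanted. Take $\sigma=\sigma_{BA}$, let $\varphi_{BA}$ be the first-order sentence defining the finite Boolean algebras, and let $\varphi_2'$ be the $\LL$-invariant sentence in the language $\sigma_{BA}\cup\{\leq\}$ witnessing that $\mfJ_2$ is $\LL$-invariant elementary on $\mfF_{\sigma_{BA}}$, as supplied by \autoref{thm:evenbool}. I would show that the single sentence $\varphi_{BA}\wedge\varphi_2'$ witnesses that $\mfJ_2$, now regarded as a subset of $\mfK_{\sigma_{BA}}^{<\omega}$, is $\LL$-invariant elementary but not elementary on $\mfK_{\sigma_{BA}}^{<\omega}$. (Alternatively, since finite Boolean algebras have Gaifman diameter $2$, the locally finite analogue of \autoref{lem:bounddiamcapt} converts the $\OO$-invariant witness of the previous theorem into an $\LL$-invariant one on $\mfK_{BA}$; but invoking \autoref{thm:evenbool} directly is cleaner.)

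For $\LL$-invariance over $\mfK_{\sigma_{BA}}^{<\omega}$, I would argue by cases on a locally finite $\sigma_{BA}$-structure $\mcA$. If $\mcA\not\models\varphi_{BA}$, then $\varphi_{BA}\wedge\varphi_2'$ fails in every $\LL$-presentation of $\mcA$, since $\varphi_{BA}$ mentions no order symbol, so invariance is trivial. If $\mcA\models\varphi_{BA}$, then $\mcA$ is a Boolean algebra, and — as already noted in the proof of the previous theorem — a locally finite Boolean algebra is finite, because every element is adjacent in the Gaifman graph to both $\top$ and $\bot$. Hence $\mcA\in\mfF_{\sigma_{BA}}$, and the $\LL$-invariance of $\varphi_2'$ on $\mfF_{\sigma_{BA}}$, together with the trivial $\LL$-invariance of $\varphi_{BA}$, gives invariance of the conjunction at $\mcA$. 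The resulting query is the class of finite Boolean algebras with an even number of atoms, i.e. $\mfJ_2$ itself.

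It then remains to see that this query is not elementary on $\mfK_{\sigma_{BA}}^{<\omega}$. Here I would suppose a first-order sentence $\psi$ in the language of $\sigma_{BA}$ had $\{\mcA\in\mfK_{\sigma_{BA}}^{<\omega}:\mcA\models\psi\}=\mfJ_2$; intersecting with $\mfF_{\sigma_{BA}}\subseteq\mfK_{\sigma_{BA}}^{<\omega}$ yields $\{\mcA\in\mfF_{\sigma_{BA}}:\mcA\models\psi\}=\mfJ_2\cap\mfF_{\sigma_{BA}}=\mfJ_2$, so $\psi$ would define $\mfJ_2$ on $\mfF_{\sigma_{BA}}$, contradicting \autoref{thm:evenbool}.

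I do not expect a genuine obstacle here; the corollary really is a corollary. The only two points that need care are the verification that $\LL$-invariance survives passage from $\mfF_{\sigma_{BA}}$ to the full locally finite class (handled by the observation that $\varphi_{BA}$ sees no order together with the fact that local finiteness forces a Boolean algebra to be finite), and the observation that elementarity on $\mfK_{\sigma_{BA}}^{<\omega}$ descends to elementarity on $\mfF_{\sigma_{BA}}$, which is immediate since $\mfJ_2$ already consists only of finite structures.
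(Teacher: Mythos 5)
Your proof is correct, but it takes a more direct route than the paper does. The paper obtains this corollary in one line from the preceding theorem (the $\OO$-invariant case on $\mfK_\sigma^{<\omega}$), by invoking the locally finite variant of \autoref{lem:elemcapt} to transfer an $\OO$-invariant elementary query to an $\LL$-invariant elementary one: a local order is definable from any linear order, so $\LL$-invariance is the weaker requirement. You instead bypass local orders entirely and verify directly that Gurevich's sentence $\varphi_{BA}\wedge\varphi_2'$ from \autoref{thm:evenbool} already witnesses $\LL$-invariant elementarity of $\mfJ_2$ on $\mfK_{\sigma_{BA}}^{<\omega}$, using the two observations that actually carry both arguments: $\varphi_{BA}$ mentions no order symbol, so invariance is vacuous on non-Boolean-algebras, and a locally finite Boolean algebra is finite (every element is Gaifman-adjacent to $\top$ and $\bot$), so invariance on Boolean algebras reduces to the finite case. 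Your non-elementarity argument (restrict a hypothetical defining sentence to $\mfF_{\sigma_{BA}}$ and contradict \autoref{thm:evenbool}) is also sound and slightly cleaner than the paper's, which routes through relativization to $\mfK_{BA}$. What the paper's route buys is uniformity---the same transfer principle disposes of the $\LL$ case for free once the $\OO$ case is done, which matters since the $\OO$ case is the harder question; what yours buys is self-containedness and the explicit confirmation that the $\LL$ case never needed the bounded-diameter machinery of \autoref{lem:bounddiamcapt} at all.
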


\begin{proof}
    By the obvious variant of \autoref{lem:elemcapt} in which $\mfF_\sigma$ is replaced by $\mfK_\sigma^{<\omega}$, every $\OO$-invariant elementary Boolean query on $\mfK_{\sigma}^{<\omega}$ is $\LL$-invariant elementary, from which the corollary follows.
\end{proof}

For every one of these separating results, the extant vocabulary $\sigma$ contains only one binary relation. Thus, we can instead say that for any vocabulary $\sigma$ with at least one binary relation, there is a $\PP$-invariant elementary Boolean query $\mfJ$ on $\mfF_\sigma$ which is not elementary. One can naturally extend this to the case where there is at least one relation of arity at least 2, but this leaves open the question of what occurs when every relation is unary. 

\begin{remark}
    As it turns out, order- and local-order-invariant logic are not stronger than first-order logic in this unary case. We sketch the argument below.

Let $\sigma=\langle U_1,\dots U_n\rangle$ be a vocabulary with only unary relations. We show that for every $k$, there exists a $k'$ such that if $\mcA$ and $\mcB$ are equivalent with respect to $\mathrm{FO}[k']$ sentences, denoted $\mcA\equiv_{k'}\mcB$, then they are equivalent with respect to order-invariant sentences of quantifier rank at most $k$, denoted $\mcA\equiv^\leq_k\mcB$. This suffices because it means that quantifier rank $k$ order-invariant logic cannot be more expressive than quantifier rank $k'$ first-order logic. Consider that the $r$-neighborhood of any element $x$ for any $r$ is simply $\{x\}$, and that the radius $r$ neighborhood type is characterized by the set $I_x=\{i\leq n:U_i(x)\}$. There are $2^n$ such sets $I_1,\dots I_{2^n}$. For any $(m_1,\dots m_{2^n})\in \{0,\dots 2^k\}^{2^n}$, there is a first-order sentence which says that $\mcA\in \mfF_\sigma$ has $m_i$ elements such that $I_x=I_i$. For any $\sigma$ structure $\mcA$, define some linear ordering $\leq^\mcA$ such that if $x$ and $y$ are elements of $\mcA$, $I_x=I_i$, $I_y=I_j$, and $i\lneq j$, then $x\lneq y$. A standard Ehrenfeucht-Fra\"iss\'e game argument shows that two pure linear orderings with at least $2^k$ elements are equivalent with respect to $\mathrm{FO}[k]$ \cite{DBLP:books/sp/Libkin04}. Almost exactly the same game argument shows that playing on these ordered $\sigma$ structures is a win for the duplicator as long as the structures are $1,2^k$-threshold equivalent.
\end{remark} 

\section{Epsilon-Invariant Logic}

In this section, we explain epsilon-invariant logic, in which formulae are allowed to make use of a choice function on the powerset of the universe as long as the truth value is independent of the specific choice function. We will give upper bounds on epsilon-invariant logic both in general and on classes of bounded degree, using the results of the previous sections for the former. Finally, we give theorems which suggest further possibilities for upper bounds both on epsilon-invariant logic and more traditional first-order invariant logics.

\subsection{Collapse on Classes of Bounded Degree}

We will give some background on what the epsilon in epsilon-invariant logic is. We define an $\epsilon$ term in the language of $\sigma$ to be an expression of the form $\epsilon_y(\varphi)$, where $\varphi(\vec x,y)$ is a first-order formula in the language of $\sigma$. We define the $\mathrm{FO}+\epsilon$ formulae in the language of $\sigma$ to be the atomic formulae in the language of $\sigma$ whose arguments may include $\epsilon$ terms, as well as the closure of these under Boolean combinations and quantification. We define an $\epsilon_k$ term as an $\epsilon$ term where $\varphi$ has quantifier rank at most $k$ and $\vec y$ has length at most $K$, and $FO+\epsilon_k$ formulae are defined accordingly.

$FO+\epsilon$ formulae are evaluated over structures of the form $(\mcA,\epsilon)$, where $\mcA$ is an element of $\mfF_\sigma$ and $\epsilon$ is a function $\mcP(\mcA)$ to $\mcA$ such that $\epsilon(X)\in X$ for $X\in\mcP(\mcA)$ and $X\neq \emptyset$, where $\mcP(\mcA)$ is the powerset of the domain of $\mcA$. In other words, $\epsilon$ is a choice function on the powerset of $\mcA$. An $\epsilon$ term $\epsilon_y(\varphi(\vec x, y))$ is evaluated as $\epsilon(\{a\in \mcA:\varphi(a,\vec b)\text{ holds in }\mcA\})$ for $\vec b$ a tuple of elements of $\mcA$ of the same length as $\vec y$. 

Now, finally, we come to the invariant half of the phrase epsilon-invariant logic:
\begin{definition}
    Let $\varphi$ be an $FO+\epsilon$ formula. We say that it is $\epsilon$-invariant if, in the same way $\PP$-invariant sentences do not depend on choice of $\PP$ structure, $\varphi$ does not depend on the choice of choice function $\epsilon$. $\epsilon_k$-invariance is defined analogously.
\end{definition}

Just as we used \autoref{lem:elemcapt} for bounding invariant logics defined in terms of presentation schemes, we will use the following lemma to shorten proofs in which we bound epsilon-invariant logic.

\begin{lemma}\label{lem:epscontained}
    Let $\PP$ be an $R$-presentation scheme on $\mfK\subseteq\mfF_\sigma$. If there exist a tuple of variables $\vec z$ and a sentence $\psi(\vec z)$ in the language of $\sigma\cup\{R\}$ such that for every $\mathrm{FO}[k]$ formula $\varphi(\vec x,y)$ in the language of $\sigma$ with $\vec x$ of length $k$, there exists a formula $\Phi_{\varphi}(\vec x,y,\vec z)$ in the language of $\sigma\cup\{R\}$ such that for every $\mcA\in\mfK$, every tuple $\vec a$ of elements of $\mcA$ of the same length as $x$, every tuple $\vec c$ of the same length as $\vec z$ such that $\psi(\vec c)$ holds in $\mcA$, and every $\mcA'\in\PP$ with $\mcA'\PP\mcA$, $\Phi_{\varphi}$ satisfies
    \begin{enumerate}
        \item $(\exists!y)\Phi_{\varphi}(\vec z,y,\vec c)$
        \item and $(\forall y)\Phi_{\varphi}(\vec a,y,\vec c)\rightarrow \varphi(\vec a,y)$,
    \end{enumerate}
    in $\mcA'$, then every $\epsilon_k$-invariant elementary boolean query on $\mfK$ is a $\PP$-invariant elementary boolean query on $\mfK$.
\end{lemma}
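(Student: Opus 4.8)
The plan is to follow the template of \autoref{lem:elemcapt}: starting from an $\epsilon_k$-invariant $\mathrm{FO}+\epsilon_k$ sentence $\theta$ witnessing the query $\mfJ$, I would produce an ordinary first-order sentence $\theta'$ in the language of $\sigma\cup\{R\}$ which is $\PP$-invariant over $\mfK$ and witnesses $\mfJ$, by substituting the supplied formulas $\Phi_\varphi$ for the $\epsilon$ terms. Since $\theta$ is a finite expression it contains only finitely many $\epsilon$ terms $\epsilon_y(\varphi_1(\vec x_1,y)),\dots,\epsilon_y(\varphi_m(\vec x_m,y))$, and by renaming bound variables we may assume the distinguished variables $y$ and all auxiliary variables introduced below are fresh and disjoint from $\vec z$. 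For each atomic subformula $\chi$ of $\theta$ whose arguments contain the $\epsilon$ terms $\epsilon_y(\varphi_{i_1}),\dots,\epsilon_y(\varphi_{i_p})$, let $\chi'$ be the atomic $\sigma$-formula obtained by replacing the $l$-th of these terms with a fresh variable $y_l$, and replace $\chi$ throughout $\theta$ by $\exists y_1\cdots\exists y_p\big(\bigwedge_{l=1}^{p}\Phi_{\varphi_{i_l}}(\vec x_{i_l},y_l,\vec z)\wedge\chi'\big)$. Call the resulting $\sigma\cup\{R\}$-formula $\theta^{\dagger}(\vec z)$ and set $\theta':=\exists\vec z\,\big(\psi(\vec z)\wedge\theta^{\dagger}(\vec z)\big)$.

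I would then verify that $\theta'$ is $\PP$-invariant over $\mfK$ and defines $\mfJ$. Fix $\mcA\in\mfK$, an arbitrary $\mcA'\in\PP$ with $\mcA'\PP\mcA$, and an arbitrary tuple $\vec c$ with $\mcA'\models\psi(\vec c)$ (for $\theta'$ to behave correctly one also needs such a $\vec c$ to exist in every $\mcA'\in\PP$ with $\mcA'\upharpoonright_\sigma\in\mfK$, a mild side condition met in all our applications of the lemma). By Conditions (1) and (2), for each $i$ and each tuple $\vec a$ there is a unique $f_i(\vec a)$ with $\mcA'\models\Phi_{\varphi_i}(\vec a,f_i(\vec a),\vec c)$, and moreover $\mcA'\models\varphi_i(\vec a,f_i(\vec a))$; as $\varphi_i$ is a $\sigma$-formula and $\mcA'\upharpoonright_\sigma=\mcA$, this gives $f_i(\vec a)\in\{b\in\mcA:\mcA\models\varphi_i(\vec a,b)\}$, which is in particular nonempty. \emph{Provided} these choices cohere — $f_i(\vec a)=f_j(\vec a')$ whenever $\{b:\varphi_i(\vec a,b)\}=\{b:\varphi_j(\vec a',b)\}$ — they extend (arbitrarily on the remaining nonempty sets, and on $\emptyset$) to a genuine choice function $\epsilon_{\vec c}$ on $\mcP(\mcA)$. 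A routine induction on the structure of $\theta$, where the only nontrivial case is an atomic formula and there the inner $\exists y_l$ selects exactly $f_{i_l}(\vec a_{i_l})=\epsilon_{\vec c}(\{b:\varphi_{i_l}(\vec a_{i_l},b)\})$ while $\chi'$ is atomic over $\sigma$ and hence has the same truth value in $\mcA'$ as in $\mcA$, then shows that $\mcA'\models\theta^{\dagger}(\vec c)$ iff $(\mcA,\epsilon_{\vec c})\models\theta$. By $\epsilon_k$-invariance of $\theta$ the latter is equivalent to $\mcA\in\mfJ$, independently of $\vec c$ and of $\mcA'$; hence $\mcA'\models\theta'$ iff $\mcA\in\mfJ$, so $\theta'$ is $\PP$-invariant over $\mfK$ and witnesses $\mfJ$.

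The step I expect to be the genuine obstacle is the coherence of the $f_i$: Conditions (1) and (2) alone do not prevent two of the formulas $\Phi_{\varphi_i},\Phi_{\varphi_j}$ from disagreeing on a set that happens to admit two different first-order presentations in a given $\mcA$, and if they disagree then $\theta^{\dagger}(\vec c)$ no longer records the evaluation of $\theta$ against a single choice function, so $\epsilon_k$-invariance cannot be applied. The way around this is to fix the family $\{\Phi_\varphi\}$ coherently from the start — for instance, so that $\Phi_\varphi(\vec a,y,\vec z)$ asserts that $y$ is the least element of $\{b:\varphi(\vec a,b)\}$ with respect to a linear order on $\mcA$ that is first-order definable from $R$ and the parameters $\vec z$; then $\Phi_\varphi$ depends on $(\varphi,\vec a)$ only through the set $\{b:\varphi(\vec a,b)\}$, coherence is automatic, and this is precisely the form the $\Phi_\varphi$ take in the applications. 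The remaining points are routine: only finitely many $\epsilon$ terms occur in $\theta$ (and, globally, there are only finitely many $\mathrm{FO}[k]$ formulas in the variables $\vec x,y$ up to logical equivalence, so the hypothesis only constrains finitely many $\Phi_\varphi$), and one must perform the substitution with the usual care about variable capture inside nested quantifiers.
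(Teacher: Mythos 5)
Your construction is essentially the paper's own: the official proof likewise replaces each $\epsilon$ term by a fresh variable and pins that variable down via $\Phi_\varphi$ and the parameters $\vec z$ (it does so with universal quantifiers guarded by $\psi(\vec z)\wedge\bigwedge_i\Phi_{\varphi_i}$ where you use existential ones; by Condition (1) these coincide, modulo the same mild side condition you flag, namely that a tuple satisfying $\psi$ must exist in every presentation).

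The coherence point you raise is genuine, and the paper's proof does not address it either: since $\epsilon_y(\varphi(\vec a,y))$ is evaluated as $\epsilon$ applied to the \emph{set} $\{b:\varphi(\vec a,b)\}$, two terms defining the same set must receive the same witness under any single choice function, and Conditions (1) and (2) alone do not force the $\Phi_\varphi$'s to agree when distinct pairs $(\varphi_i,\vec a)$ and $(\varphi_j,\vec a')$ happen to define the same set in a given structure. Without that agreement the substituted sentence need not record the evaluation of $\theta$ against any genuine $\epsilon$, so $\epsilon_k$-invariance cannot be invoked (a sentence asserting $\epsilon_y(\varphi_i(\vec a,y))\neq\epsilon_y(\varphi_j(\vec a',y))$ for two parameters with the same definable neighbourhood set already witnesses the failure). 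Your repair---requiring each $\Phi_\varphi(\vec a,y,\vec z)$ to depend on $(\varphi,\vec a)$ only through the set $\{b:\varphi(\vec a,b)\}$, e.g.\ by selecting its least element in an order definable from $R$ and $\vec z$---is the right strengthening of the hypothesis, and it is the form that needs to be verified in the application in \autoref{lem:epssubOO}. So your version should be read as a corrected statement of the lemma rather than a departure from the paper's argument.
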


\begin{proof}
    The proof of this result is similar to that of \autoref{lem:elemcapt}. Expanding on that, we begin by taking some $\epsilon_k$-invariant sentence and taking its prenex normal form. we can replace the occurrences of a term $\epsilon(\varphi(\vec x,y))$ with a variable $x_\varphi$, and then construct our formula in the language $\sigma\cup \{R\}$. We begin by fixing the parameters $\vec z$, then quantifying over the variables of the actual formula, deciding on some $x_\varphi$ to be $\epsilon(\varphi)$ via $\Phi_\varphi$ and these former two sets of variables, and finally evaluating the quantifier-free part of the modified formula using the decided upon $x_\varphi$'s. The full construction is detailed in \autoref{proof:epscontained}.
\end{proof}

\begin{remark}\label{rem:subeps}
    There is a converse result for $\epsilon$-invariant definability almost identical to \autoref{lem:elemcapt}, except because an $\epsilon_k$ operator is not a first-order relation (it is a relation on relations instead of on elements), it is not subsumed by that lemma. 
\end{remark}

Recall the Hanf threshold theorem (\autoref{thm:hanflocalitytheorem}). Another famous notion of locality is that of Gaifman \cite{conf/lc/Gaifman82}, originally introduced for infinite models. The notion used in the finite is an adaptation of Hella, Libkin, and Nurmonen \cite{lindell2025hanflocalityinvariantelementary}:

\begin{definition}
    Let $Q$ be an unary query on $\mfK\subseteq\mfF_\sigma$, i.e., a boolean query on the set of structures of the form $(\mcA,a)$ with $\mcA\in\mfK$ and $a$ an element of $\mcA$. We say that $Q$ is Gaifman local if there exists an $r$ such that for any such $\mcA$ and pair $a,b$ of elements of $\mcA$, if $\tp_r^\mcA(a)=\tp_r^\mcA(b)$, then $Q(\mcA,a)=Q(\mcA,b)$.
\end{definition}

We have the Gaifman locality theorem below for first-order logic, first shown by Gaifman in the infinite \cite{conf/lc/Gaifman82} and then by Hella, Libkin, and Nurmonen in the finite \cite{DBLP:journals/jsyml/HellaLN99}.

\begin{theorem}[Hella-Libkin-Nurmonen]\label{thm:gaifmanlocality}
    Every elementary unary query is Gaifman local. 
\end{theorem}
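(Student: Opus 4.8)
The plan is to derive Gaifman locality from the Hanf threshold locality theorem (\autoref{thm:hanflocalitytheorem}), which we already have in hand. Let $Q$ be an elementary unary query on $\mfK\subseteq\mfF_\sigma$, defined by a first-order formula $\varphi(x)$ over $\sigma$. I would introduce a fresh unary predicate $P$, set $\sigma^+=\sigma\cup\{P\}$, and regard each pair $(\mcA,a)$ as the $\sigma^+$-structure in which $P$ is interpreted as $\{a\}$; then $Q$ is defined on such structures by the $\sigma^+$-sentence $\psi\equiv\exists x\,(P(x)\wedge\varphi(x))$. A unary predicate contributes no edges to the Gaifman graph, so $\mcG((\mcA,a))=\mcG(\mcA)$, and an $s$-neighborhood type over $\sigma^+$ is nothing more than an $s$-neighborhood type over $\sigma$ together with the information of whether --- and where --- the marked point $a$ lies inside the ball. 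Applying \autoref{thm:hanflocalitytheorem} over the (still finite and relational) vocabulary $\sigma^+$, we obtain $s,t\in\omega$ such that $\psi$, and hence $Q$ restricted to structures of the form $(\mcA,a)$, is Hanf $s,t$-threshold local.

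The heart of the matter is the following claim: if $r\geq 2s$ and $\tp_r^\mcA(a)=\tp_r^\mcA(b)$ (types over $\sigma$), then $(\mcA,a)\approx_{s,t}(\mcA,b)$ --- in fact with exact equality, not merely equality up to the threshold $t$, of the number of elements realizing each $s$-neighborhood type over $\sigma^+$. To prove it, fix an isomorphism $f\colon N_r^\mcA(a)\to N_r^\mcA(b)$ with $f(a)=b$. For $x$ with $\delta^\mcA(x,a)\leq r-s$, any geodesic of length at most $s$ issuing from $x$ stays inside $B_r^\mcA(a)$, so $f$ restricts to an isomorphism $B_s^\mcA(x)\to B_s^\mcA(f(x))$ that carries $a$ to $b$ precisely when $a\in B_s^\mcA(x)$; consequently $f$ is a bijection $B_{r-s}^\mcA(a)\to B_{r-s}^\mcA(b)$ preserving $s$-neighborhood types over $\sigma^+$. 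Now fix an $s$-type $\tau$ over $\sigma^+$ and count its realizations. If $\tau$ records the marked point inside the ball, then every realization $x$ has $\delta^\mcA(x,a)\leq s$, hence lies in $B_s^\mcA(a)\subseteq B_{r-s}^\mcA(a)$, so the realizations on the $a$-side biject with those on the $b$-side via $f$. If $\tau$ does not record the marked point, then the number of realizations on the $a$-side equals $\#\{x:\tp_s^\mcA(x)=\tau\upharpoonright_\sigma\}-\#\{x\in B_s^\mcA(a):\tp_s^\mcA(x)=\tau\upharpoonright_\sigma\}$; the analogous expression on the $b$-side has the same first term (it is evaluated in the same $\mcA$) and, since $f$ restricts to a type-preserving bijection $B_s^\mcA(a)\to B_s^\mcA(b)$, the same second term. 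In either case the two counts agree, which proves the claim.

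Putting the pieces together, take $r=2s$: given $\mcA$ and $a,b$ with $\tp_r^\mcA(a)=\tp_r^\mcA(b)$, the claim yields $(\mcA,a)\approx_{s,t}(\mcA,b)$, and Hanf $s,t$-threshold locality of $\psi$ then gives $(\mcA,a)\models\psi\Leftrightarrow(\mcA,b)\models\psi$, that is, $Q(\mcA,a)=Q(\mcA,b)$. Thus $Q$ is Gaifman local with locality radius $2s$. I expect the main obstacle to be the geometric bookkeeping in the second paragraph: one must verify that $f$ is ``honest'' in the sense that graph distances measured inside the radius-$r$ ball coincide with the true distances in $\mcA$ on the sub-balls that matter, so that $f$ genuinely transports $s$-neighborhood types and the position of the marked point rather than merely the abstract isomorphism type of the $r$-ball. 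Once that is pinned down, everything else is routine counting.
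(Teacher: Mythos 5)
The paper does not prove this statement at all: it is imported as a black-box citation to Hella--Libkin--Nurmonen (and Gaifman in the infinite case), so there is no in-paper argument to compare against. Your derivation is correct and self-contained, and it follows what is essentially the standard route in the literature: encode the distinguished point as a fresh unary predicate (which leaves the Gaifman graph untouched), invoke \autoref{thm:hanflocalitytheorem} over the expanded vocabulary, and then show that agreement of $2s$-neighborhood types of $a$ and $b$ inside a single structure forces exact equality of the counts of every $s$-neighborhood type over $\sigma\cup\{P\}$, so the threshold $t$ never matters. The geometric bookkeeping you flag as the main risk does go through: for $x$ with $\delta^\mcA(x,a)\leq r-s$ one has $B_s^{N_r^\mcA(a)}(x)=B_s^\mcA(x)$ because geodesics of length at most $s$ from $x$ stay in $B_r^\mcA(a)$, distances from the center are computed faithfully inside $N_r^\mcA(a)$ since geodesics from $a$ stay in the ball, and $a\in B_s^\mcA(x)$ iff $b\in B_s^\mcA(f(x))$ because $f$ is a bijection fixing the marked point. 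Your radius $2s$ is in fact sharper than the $3d+1$ bound one gets from the general $m$-ary Hanf-implies-Gaifman argument, which is available here because both points live in the same structure. No gaps.
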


(As with Hanf threshold locality, this fails even in weak logics such as $\mathrm{FO(TC)}$ \cite{DBLP:books/sp/Libkin04}, but has in fact been shown to hold for order-invariant logic by \cite{DBLP:conf/mfcs/GroheS98}.) \autoref{thm:gaifmanlocality} is important in our context because it tells us that the arguments of $\epsilon$ are local, which allows us to use local orders to bound epsilon-invariant logic on classes of bounded degree, as in the following lemma.

\begin{lemma}\label{lem:epssubOO}
    Let $\mfK$ be a subset of $\mfF_\sigma^d$, and let $\mfJ\subseteq \mfK$ be an $\epsilon$-invariant elementary boolean query. Then $\mfJ$ is an $\OO$-invariant elementary boolean query on $\mfK$.
\end{lemma}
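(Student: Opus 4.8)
The plan is to apply \autoref{lem:epscontained} with $\PP=\OO$. By \autoref{lem:elempres} and \autoref{lem:localpres}, $\OO$ is an elementary, local, neighbourhood-bounded $\preceq$-presentation scheme on $\mfF_\sigma$, hence a fortiori on $\mfK\subseteq\mfF_\sigma^d$. Any single $\epsilon$-invariant sentence witnessing $\mfJ$ mentions only finitely many $\epsilon$ terms, each built from a first-order formula of some finite quantifier rank and finitely many free variables, so it is an $\epsilon_k$-invariant sentence for some $k$. It therefore suffices to exhibit a parameter tuple $\vec z$, a sentence $\psi(\vec z)$ over $\sigma\cup\{\preceq\}$, and, for every first-order $\varphi(\vec x,y)$ with $|\vec x|\le K$ and quantifier rank at most $k$, a selection formula $\Phi_\varphi(\vec x,y,\vec z)$ over $\sigma\cup\{\preceq\}$ satisfying conditions (1) and (2) of \autoref{lem:epscontained}: informally, $\Phi_\varphi$ must, using only the local order and the fixed parameters, pin down a single element of $X_{\vec a}:=\{b:\varphi(\vec a,b)\}$ that lies in $X_{\vec a}$ whenever $X_{\vec a}\neq\emptyset$.

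First I would fix the locality radius. Since $\sigma$ is finite and relational, there are, up to logical equivalence, only finitely many first-order formulas of quantifier rank at most $k$ with free variables among $x_1,\dots,x_K,y$; applying \autoref{thm:gaifmanlocality} to each (in its standard adaptation to formulas with parameters) and taking the maximum of the resulting radii yields a single $r_0$ such that for all such $\varphi$, all $\mcA$ and all $\vec a$, membership of $b$ in $X_{\vec a}$ is determined by the isomorphism type of $N_{r_0}^\mcA(b)$ together with the position of $b$ relative to $\vec a$ within radius $r_0$; in particular, if $b$ lies at distance greater than $r_0$ from every $a_i$, then $b\in X_{\vec a}$ depends only on $\tp_{r_0}^\mcA(b)$. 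On $\mfF_\sigma^d$ there are only finitely many $r_0$-neighbourhood types, and $|B_{r_0}^\mcA(a_i)|\le g(d,r_0)$, so $X_{\vec a}$ decomposes into a \emph{near} part $X_{\vec a}\cap\bigcup_i B_{r_0}^\mcA(a_i)$, of cardinality at most $K\cdot g(d,r_0)$, and a \emph{far} part which is a union of entire $r_0$-type classes of elements lying outside $\bigcup_i B_{r_0}^\mcA(a_i)$.

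Next I would define $\Phi_\varphi$ by a priority rule. For the near part, reuse the construction in the proof of \autoref{lem:bounddiamcapt}: from $\preceq$ and a base point $a_i$ it first-order defines a linear order $\lambda_{r_0}(a_i,\cdot,\cdot)$ of $B_{r_0}^\mcA(a_i)$ (the induction terminates since the balls are finite on $\mfF_\sigma^d$), and concatenating these for $i=1,\dots,K$ gives a definable linear order on $\bigcup_i B_{r_0}^\mcA(a_i)$; when the near part is nonempty, $\Phi_\varphi$ selects its $\preceq$-least element, using only $\preceq$ and the free variables $\vec a$. For the far part we use the parameters: $\vec z$ lists, for each of the finitely many $r_0$-types $\mu$ and for each structure that realizes $\mu$, a block of $K\cdot g(d,r_0)+1$ pairwise-distant elements of type $\mu$, and $\psi(\vec z)$ asserts exactly this; when the near part is empty but the far part is not, $\Phi_\varphi$ picks, in the block for the canonically least far-type realized in $X_{\vec a}$, the first element at distance greater than $r_0$ from every $a_i$. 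A pigeonhole bound on the size of the near region guarantees such an element exists, and by the far-part analysis it genuinely lies in $X_{\vec a}$. The degenerate case $X_{\vec a}=\emptyset$ is absorbed by the usual convention on $\epsilon(\emptyset)$. In every case $\Phi_\varphi$ defines a unique $y$, lying in $X_{\vec a}$ when $X_{\vec a}\neq\emptyset$, so \autoref{lem:epscontained} applies and $\mfJ$ is $\OO$-invariant elementary on $\mfK$. Note that $r_0$, the list of $r_0$-types, and hence $\vec z$ and $\psi$ depend only on $k$, $\sigma$ and $d$, not on the individual $\varphi$, as \autoref{lem:epscontained} requires.

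I expect the far case to be the main obstacle: the parameters $\vec z$ are committed to before $\vec a$ is seen, yet the representative chosen from $\vec z$ must be a legitimate $\epsilon$-choice for $X_{\vec a}$, i.e.\ must actually belong to that set. What makes this go through is the combination of (i) the bounded-degree fact that membership of a far element in $X_{\vec a}$ depends only on its $r_0$-type rather than its identity, and (ii) the multiplicity trick of storing enough representatives of each type that one is forced to avoid the bounded region around any $\vec a$. A secondary point to get right is the passage from $\epsilon$-invariance to $\epsilon_k$-invariance for a uniform $k$, and the uniform-in-$\varphi$ choice of $r_0$, but these are routine once the finiteness of formulas of bounded quantifier rank over a finite vocabulary is invoked.
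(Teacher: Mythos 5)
Your proposal follows essentially the same route as the paper's proof: reduce to $\epsilon_k$-invariance, fix a uniform Gaifman radius via \autoref{thm:gaifmanlocality}, split the witness set into a near part handled by the definable linear order $\lambda_{r}$ on $\bigcup_i B_{r}^\mcA(a_i)$ (reusing \autoref{lem:bounddiamcapt}) and a far part handled by parameters listing enough representatives of each $r$-neighbourhood type, with a pigeonhole argument forcing at least one representative to lie outside the balls around $\vec a$. The one step that would fail as written is your $\psi(\vec z)$: you demand that each block consist of $K\cdot g(d,r_0)+1$ \emph{pairwise-distant} elements of type $\mu$, but a structure in $\mfK$ may realize $\mu$ with fewer elements, or only with elements close to one another, in which case no tuple satisfies $\psi$; since the translation in \autoref{lem:epscontained} has the shape $(\forall\vec z)\,(\psi(\vec z)\wedge\cdots)\rightarrow\cdots$, an unsatisfiable $\psi$ makes the translated sentence vacuously true on such structures and breaks the equivalence with the original query. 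The paper's proof avoids this by requiring only distinctness (not pairwise distance) and by making $\psi$ a disjunction for each type $\tau_{j'}$: either the listed $z_{j,j'}$ are all distinct, or they exhaust the elements of type $\tau_{j'}$; in the exhaustive case, either every element of that type lies in the near region (so the near case applies) or one of the listed parameters is already outside the balls and can be selected. With that repair your argument coincides with the paper's.
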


\begin{proof}
    At a high level, we will express an arbitrary $\eps_k$-invariant elementary boolean query using both parameters and a local order, so as to apply \autoref{lem:epscontained}. Given an $\epsilon_k$-invariant sentence $(Q_1x_1)\dots (Q_nx_n)\varphi(x_1,\dots, x_n)$ with $\varphi$ quantifier-free, by \autoref{thm:gaifmanlocality} there is some $r$ such that each of the $\epsilon$ terms in $\varphi$ depend only on the radius $r$ neighborhoods around $x_1,\dots,x_n$. Our new $\OO$-invariant formula will first set via existential quantifiers enough elements of the structure that no matter what $x_1,\dots, x_n$ are given the $Q_1,\dots ,Q_n$, either all of the elements of neighborhood type $\tau$ are within $B_r(x_i)$ for some $i$ or one of the fixed elements is outside of every $B_r(x_i)$. It is possible to do this because of degree-boundedness. Next, for any $\epsilon$ term in $\varphi$, either there is a witness inside some $B_r(x_i)$, in which case we use the $\lambda_r$ defined in the proof of \autoref{lem:bounddiamcapt} to define a linear ordering on $\bigcup_i B_r(x_i)$ and take the minimum witness with respect to this ordering, or there are only witnesses outside $B_r(x_i)$, in which case we take our witness to be one of the previously fixed elements. The full details can be seen in \autoref{proof:epssubOO}.
\end{proof}

We have shown that $\OO$-invariant elementary Boolean queries are elementary in \autoref{thm:localordercollapse}, so as a corollary of this lemma, epsilon-invariant logic collapses to first-order logic as well.

\begin{theorem}
    Let $\mfK$ be a subset of $\mfF_\sigma^d$, and let $\mfJ\subseteq \mfK$ be an $\epsilon$-invariant elementary boolean query. Then $\mfJ$ is an elementary boolean query on $\mfK$.
\end{theorem}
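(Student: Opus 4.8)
The plan is to simply chain the two principal results already in hand; no new ideas are needed. First I would invoke \autoref{lem:epssubOO}: since $\mfK\subseteq\mfF_\sigma^d$ and $\mfJ\subseteq\mfK$ is an $\epsilon$-invariant elementary Boolean query, that lemma immediately yields that $\mfJ$ is an $\OO$-invariant elementary Boolean query on $\mfK$. (Unwinding it: one takes the given $\epsilon$-invariant sentence, notes it uses only finitely many $\epsilon$-terms so it is $\epsilon_k$-invariant for a suitable $k$, applies Gaifman locality (\autoref{thm:gaifmanlocality}) to bound the radius $r$ on which those $\epsilon$-terms depend, and then feeds the data into \autoref{lem:epscontained} with $\PP=\OO$; all of this is carried out in the proof of \autoref{lem:epssubOO}, so here I would only cite it.)

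Second, I would apply the collapse theorem for local-order-invariant logic. \autoref{thm:localordercollapse} states that every elementary $\OO$-invariant Boolean query on $\mfF_\sigma^d$ is first-order definable, and the sentence following its proof observes that the argument goes through for any subclass of $\mfF_\sigma^d$: the hypotheses of \autoref{thm:localinvhanf} (locality, neighborhood-boundedness and elementarity of $\OO$) are verified for all of $\mfF_\sigma$ in \autoref{lem:elempres} and \autoref{lem:localpres}, hence a fortiori over $\mfK$, and the definability-of-Hanf-classes construction in \autoref{lem:defhanf} already quantifies only over $\mfF_\sigma^d$-structures realizing a given threshold profile, so it restricts cleanly to $\mfK$. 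Applying this with the ambient class taken to be $\mfK$ turns the $\OO$-invariant elementary query $\mfJ$ produced in the previous paragraph into an honest elementary query on $\mfK$, which is exactly the assertion of the theorem.

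I do not anticipate a genuine obstacle: the statement is a corollary, and the substance lives upstream, in \autoref{lem:epssubOO} (using degree-boundedness to pin down, with parameters, where each $\epsilon$-witness can lie and a local order to choose among the witnesses inside the relevant radius-$r$ balls) and in \autoref{thm:localordercollapse} (the Hanf-threshold-locality route to collapse). The only point I would be careful to state explicitly, rather than wave at, is the passage from the $\mfF_\sigma^d$-form of \autoref{thm:localordercollapse} to its $\mfK$-form, and the observation above about \autoref{lem:defhanf} and \autoref{thm:localinvhanf} dispatches it in one line.
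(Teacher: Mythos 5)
Your proposal is correct and follows exactly the route the paper takes: \autoref{lem:epssubOO} places $\mfJ$ in the $\OO$-invariant elementary queries, and \autoref{thm:localordercollapse} (in the subclass form noted immediately after its proof) then yields elementarity on $\mfK$. Your extra care about restricting \autoref{thm:localinvhanf} and \autoref{lem:defhanf} to a subclass of $\mfF_\sigma^d$ is the same one-line observation the paper relies on.
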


\subsection{Other Capturings}

In this section, we introduce a natural generalization of $R$-presentations to $\rho$-presentations where $\rho$ is a finite relational vocabulary. We show that the two are equivalent. We also define ways to combine a family $\PP_1,\dots \PP_n$ of $R_1,\dots R_n$-presentations. We show that we can use these combinations to express $\epsilon$-invariant sentences as $\PP$-invariant sentences, the latter of which more finite model theoretic tools are amenable to.

\begin{definition}
    We will define a $\rho$-presentation of $\mfK\subseteq\mfF_\sigma$, where $\rho$ is a finite relational vocabulary, in the natural way (a subset of $\mfF_{\sigma\cup\rho}$ such that every element of $\mfF_\sigma$ has an extension). Elementarity is defined similarly.
\end{definition}

\begin{lemma}\label{lem:multipartequiv}
    Let $\rho$ be a vocabulary $\langle R_1,\dots R_n\rangle$, and let $a_n$ be the arity of $R_n$. Then for every $\rho$-presentation $\PP$ of $\mfK\subseteq\mfF_\sigma$, there is an $R$-presentation $\PP'$ of $\mfK$, where $R$ has arity $a_R=\sum_{i\leq n}a_i$, such that every boolean query $\mfJ\subseteq \mfK$ is $\PP$-invariant elementary if and only if it is $\PP'$-invariant elementary on $\mfK$. Furthermore, this $\PP$ is elementary if and only if $\PP'$ is.
\end{lemma}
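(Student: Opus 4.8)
The plan is to combine the relations $R_1,\dots,R_n$ of $\rho$ into a single relation $R$ of arity $a_R=\sum_i a_i$ by a first-order definable, first-order invertible transformation $\mathrm{enc}$, set $\PP'$ to be the class of pairs $(\mcA,R)$ with $R=\mathrm{enc}(R_1,\dots,R_n)$ for some $(\mcA,R_1,\dots,R_n)\in\PP$ (together with a trivial choice on the one-element structures of $\mfK$, see below), and then argue that invariant-definability transfers in both directions. Since $\mathrm{enc}$ preserves the $\sigma$-reduct, $\PP'$ is again a presentation scheme for $\mfK$; since $\mathrm{enc}$ and its inverse are cut out by fixed first-order formulae, $\PP$ is elementary if and only if $\PP'$ is. For structures of size at least $2$ the transfer of queries is then exactly as in the proof of \autoref{lem:elemcapt}, applied once in each direction: a $\PP'$-invariant sentence becomes a $\PP$-invariant one defining the same query by substituting the formula computing $R$ from $R_1,\dots,R_n$, and a $\PP$-invariant sentence becomes a $\PP'$-invariant one by substituting the decoding formulae computing each $R_i$ from $R$, with $\PP$- resp.\ $\PP'$-invariance guaranteeing the substituted sentences are well-defined.

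The substance is in constructing $\mathrm{enc}$. I would build it by merging two relations at a time, so it suffices to combine a relation $S$ of arity $a$ and a relation $T$ of arity $b$ into a relation $U$ of arity $a+b$, first-order definably and first-order invertibly (assuming $a,b\ge 1$; a nullary relation is a single Boolean flag and is dealt with by a minor variant). The naive attempt---let $U$ carry $S$ on the coordinate slab $\{\bar x : x_{a+1}=\dots=x_{a+b}=x_1\}$ and $T$ on the slab $\{\bar x : x_1=\dots=x_a=x_{a+1}\}$---fails, because coordinate slabs along different directions always intersect, here in the constant tuples. The fix is to keep $S$ on the first slab but store $T$ redundantly so that its region dodges that slab: for a non-constant $\bar w\in A^b$ put $U(w_1,\dots,w_1,\bar w):=T(\bar w)$, and for a constant $\bar w=(c,\dots,c)$ put $U(e,c,\dots,c,c,\dots,c):=T(\bar w)$ for every $e\ne c$; set $U$ false on all remaining tuples. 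One checks that the region storing $S$ and the (redundant) region storing $T$ are disjoint and that $U$ is recovered by $S(\bar u)\equiv U(\bar u,u_1,\dots,u_1)$ and, for $\bar w$ respectively non-constant or constant, $T(\bar w)\equiv U(w_1,\dots,w_1,\bar w)$ or $T(\bar w)\equiv \exists e\,\big(e\ne w_1\wedge U(e,w_1,\dots,w_1,\bar w)\big)$. Iterating $n-1$ such merges yields $R$ of arity $\sum_i a_i$ together with first-order decoding formulae for each $R_i$.

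The one real obstacle is that the redundant-storage step needs a witness $e\ne w_1$, so the merge---indeed any faithful encoding---only works on structures with at least two elements (a one-element structure leaves $R$ with one bit while $(R_1,\dots,R_n)$ carries $n$). I would handle this by treating the one-element structures of $\mfK$ separately: up to isomorphism there are only finitely many, and over a single fixed structure every Boolean query is trivially both $\PP$- and $\PP'$-invariant elementary (combine $x=x$, $x\ne x$, and sentences pinning down the isomorphism type), so both translations above acquire a harmless case split on whether $|A|\ge 2$. I expect the bulk of the write-up to be the routine verification that the storage regions in the merge are pairwise disjoint and that the stated decoding identities hold, after which the invariance and query-preservation bookkeeping follows the pattern of \autoref{lem:elemcapt}.
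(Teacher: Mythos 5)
Your proposal is correct, but it takes a genuinely different encoding from the paper's. The paper forms the ``product'' relation: $R(\vec x)$ holds iff $R_i$ holds of the $i$-th block of $\vec x$ for \emph{every} $i$, and then recovers each $R_i$ by quantifying out the other blocks; the translation of sentences in both directions is then the substitution argument of \autoref{lem:elemcapt}, exactly as in your first paragraph. Your construction instead builds an injective pairing by storing each $R_i$ on a diagonal slab, with the redundant storage trick to keep the slabs disjoint at constant tuples. What your approach buys is faithfulness: the product encoding is lossy when some $R_j$ is empty (then $R$ is empty and all information about the other components is destroyed), and the paper's universal-quantifier decoding formula only returns $R_i$ when every other $R_j$ is total, so the paper's two translation formulae do not actually invert one another on arbitrary tuples of relations --- the argument there implicitly leans on being free to choose which presentation of $\mcA$ one decodes to. Your encoding sidesteps all of this at the modest price of the $|A|=1$ case split, which you handle correctly (over one-element structures every Boolean query on $\mfK$ is outright elementary, so both invariance notions are vacuously satisfied there). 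The disjointness and decoding checks you defer are indeed routine and go through: the $S$-slab forces the last $b$ coordinates to equal the first, which the non-constant $T$-slab violates by non-constancy and the constant $T$-slab violates by the witness $e\neq c$. In short, your route is longer but more robust than the paper's, and it would be the one to use if one needed the correspondence $\PP\leftrightarrow\PP'$ to be a genuine bijection on presentations rather than merely a definable passage in each direction.
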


\begin{proof}
    We will informally be using, for each $i$, $a_i$ of the inputs of $R$ to code $R_i$. More specifically, denoting $\sum_{j\lneq i}a_j$ by $s_i$, for any $\mcA\in \mfK$ and $R_1,\dots R_n$ such that $(\mcA,R_1,\dots R_n)\in \PP$, we define $R$ on $\mcA$ such that $R(x_1,\dots x_{a_R})$ holds if and only if $R_i(x_{s_i+1},\dots x_{s_{i+1}})$ for every $a$. The proof of \autoref{lem:elemcapt} generalizes immediately to $\rho$-presentation schemes, so we will proceed so as to use this slightly modified lemma. First, for any $\mcA'\in\PP$ we define a $\PP'$ structure on $\mcA'\upharpoonright_\sigma$ via the following formulae:
    \begin{align*}
        \varphi_R(x_1,\dots x_{a_R})\equiv \bigwedge_{i\leq n} R_i(x_{s_i+1},\dots x_{s_{i+1}}).
    \end{align*}
    Similarly, we can define a $\PP$ structure from a $\PP'$ structure, via the following family of formulae:
    \begin{align*}
        \varphi_{R_i}(x_1,\dots x_{a_i})\equiv (\forall x_1,\dots x_{s_i},x_{s_{i+1}+1}\dots x_{a_R}) R(x_1,\dots x_{a_R}).
    \end{align*}

    The elementarity is similarly easy to show by simply defining the relevant formulae. Say that $\PP$ is defined by $\phi$ in the language of $\sigma\cup\rho$. Then in a construction almost identical to that of $\theta'''$ of \autoref{lem:elemcapt}, we can replace the occurrences of each relation in $\rho$ with the corresponding formula $\varphi_{R_i}$. A similar approach works in reverse to show that elementarity of $\PP'$ implies elementarity of $\PP$.
\end{proof}

\begin{remark}\label{rem:rhosubeps}
    A lemma analogous to \autoref{lem:elemcapt} can also be established for $\rho$-presentation schemes, and indeed for $\rho$-presentation schemes and $\epsilon$-invariant elementarity as in \autoref{rem:subeps}.
\end{remark}

The following definition introduces a method by which to combine several $R_i$-presentation schemes into a single $\rho$-presentation.

\begin{definition}
    If $\PP_1,\dots \PP_n$ are respectively $R_1,\dots R_n$-presentation schemes of $\mfK\subseteq\mfF_\sigma$, we define their join $\PP_1\oplus \dots \oplus \PP_n$ to be the $\rho$-presentation scheme such that $(\mcA,R_1,\dots R_n)$ is in $\PP$ if and only if $(\mcA,R_i)$ is in $\PP_i$ for every $i$.
\end{definition}

If $\PP_1,\dots \PP_n$ are respectively $R_1,\dots R_n$-presentation schemes of $\mfK\subseteq\mfF_\sigma$, then $\PP_1\oplus\dots\PP_n$ is an elementary presentation scheme if and only if each $\PP_i$ is elementary, by the same logic as the proof of \autoref{lem:multipartequiv}.

We introduce several classes of presentation schemes whose join will be closely related to the expressivity of $\epsilon$-invariant elementary and $\OO$-invariant elementary Boolean queries.

\begin{definition}
    Let a ternary relation $C(x,y,z)$ be a local circular successor if when $x$ in $\mcA\in \mfF_\sigma$ is fixed, $C$ is a circular successor on the neighbors of $x$ in $\mcG(\mcA)$. Let $\CC$ be the class of $C$-presentations which are local successors.

    Let a binary relation $N(x,y)$ be a neighbor selector if for every $x$ in $\mcA\in\mfF_\sigma$, there exists exactly one $y$ such that $N(x,y)$, and $y$ is a neighbor of $x$ in $\mcG(\mcA)$. Let $\NN$ be the class of $N$-presentations which are neighbor selectors.
\end{definition}

Specifically, we prove that $\CC\oplus\NN$-,$\epsilon$, and $\OO$-invariant elementary expressibility are the same. 

\begin{proposition}
    If $\mfJ$ is a boolean query on $\mfK\subseteq \mfF_\sigma^d$, the following are equivalent:
    \begin{bracketenumerate}
        \item $\mfJ$ is $\CC\oplus\NN$-invariant elementary.
        \item $\mfJ$ is $\epsilon$-invariant elementary.
        \item $\mfJ$ is $\OO$-invariant elementary.
    \end{bracketenumerate}
\end{proposition}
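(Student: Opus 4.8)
The plan is to show that all three conditions coincide with plain elementary definability on $\mfK$, so that the proposition follows by assembling a short cycle of implications from results already in hand, plus one new and routine construction. First I would dispatch the trivial direction: if $\mfJ$ is elementary on $\mfK$, say defined by a $\sigma$-sentence $\psi$, then $\psi$ — read as a sentence of $\sigma\cup\{C,N\}$, as a sentence of $\sigma\cup\{\preceq\}$, or as an $\epsilon$-free sentence — is vacuously invariant, since every presentation of a fixed $\mcA\in\mfK$ has the same $\sigma$-reduct, on which $\psi$ is evaluated; so elementarity implies each of the three conditions. Two further links are already available: \autoref{lem:epssubOO} gives that every $\epsilon$-invariant elementary Boolean query on $\mfK\subseteq\mfF_\sigma^d$ is $\OO$-invariant elementary, and \autoref{thm:localordercollapse}, which applies verbatim to any subclass of $\mfF_\sigma^d$ and in particular to $\mfK$, gives that every $\OO$-invariant elementary Boolean query on $\mfK$ is elementary. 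Hence the second and third conditions and elementarity on $\mfK$ all coincide, and it only remains to tie the first condition into the cycle.

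The single genuinely new step is to show that a $\CC\oplus\NN$-invariant elementary query is $\OO$-invariant elementary, after which the chain closes. For this I would use \autoref{lem:elemcapt} in its $\rho$-presentation form (\autoref{rem:rhosubeps}) with $\PP=\OO$ and $\PP'=\CC\oplus\NN$: it is enough to write down first-order formulae $\varphi_C(x,y,z)$ and $\varphi_N(x,y)$ over $\sigma\cup\{\preceq\}$ which convert an arbitrary local order $\preceq$ on any $\mcA\in\mfK$ into a $\CC\oplus\NN$-presentation. Using the formula from the proof of \autoref{lem:elempres} that defines the $\mcG(\mcA)$-neighbors of a point, I would let $\varphi_N(x,y)$ say that $y$ is the $\preceq$-least $\mcG(\mcA)$-neighbor of $x$, and let $\varphi_C(x,y,z)$ say that $z$ is the immediate $\preceq$-successor of $y$ among the $\mcG(\mcA)$-neighbors of $x$, or else that $y$ is the $\preceq$-greatest and $z$ the $\preceq$-least such neighbor. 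For fixed $x$ the neighbor set is finite, linearly ordered by $\preceq$, and (see the next paragraph) nonempty, say $y_1\prec\dots\prec y_m$; then $\varphi_N$ picks out exactly the one neighbor $y_1$, and $\varphi_C$ realizes the $m$-cycle $y_1\mapsto\dots\mapsto y_m\mapsto y_1$, a permutation of the neighbors of $x$ with a single orbit. So $\varphi_N$ defines a neighbor selector and $\varphi_C$ a local circular successor for every local order, and \autoref{lem:elemcapt} delivers the implication.

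The argument carries no serious obstacle beyond the two cited theorems, and the point most liable to cause trouble is purely bookkeeping: whether $\CC\oplus\NN$ is genuinely a presentation scheme on $\mfK$. A vertex with exactly one $\mcG(\mcA)$-neighbor is fine, since the identity on a one-element set is a single-orbit permutation, but a vertex isolated in $\mcG(\mcA)$ admits neither a neighbor selector nor a local circular successor — so, as is implicit in the definitions of $\CC$ and $\NN$, I would take $\mfK$ to consist of structures with no isolated Gaifman vertices. If a more symmetric argument is preferred, one can instead establish that the first condition is equivalent to the second directly — recovering $C$ and $N$ on each uniformly bounded neighbor set by a bounded number of $\epsilon$-evaluations, and conversely by way of \autoref{lem:epscontained} — or that the third implies the first directly, defining $\preceq$ from $C$ and $N$ by starting at the $N$-selected neighbor and following $C$ around (degree-boundedness keeps this first-order); but routing through \autoref{thm:localordercollapse} is the shortest path, and I would take it.
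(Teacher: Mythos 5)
Your proof is correct, but it closes the cycle of implications along a different route than the paper does. The paper never invokes \autoref{thm:localordercollapse} here: it proves $(2)\Rightarrow(3)$ by \autoref{lem:epssubOO}, then $(3)\Rightarrow(1)$ by defining a local order \emph{from} a circular successor and neighbor selector (starting at the $N$-selected neighbor and following $C$ around, using degree-boundedness), and $(1)\Rightarrow(2)$ by simulating $C$ and $N$ with a bounded number of $\epsilon$-evaluations on each neighbor set --- precisely the two ``more symmetric'' alternatives you sketch in your last paragraph and then decline to pursue. You instead collapse everything to plain elementarity via \autoref{thm:localordercollapse} and add one new translation, $(1)\Rightarrow(3)$, converting a local order into a $\CC\oplus\NN$-presentation; that construction is sound and is the easy direction of the correspondence. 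The paper explicitly acknowledges immediately after the proof that your shortcut ``would be sufficient,'' but deliberately avoids it because the point of the proposition is to exhibit direct, mutually inverse translations between $\OO$, $\CC\oplus\NN$, and $\epsilon$ --- the decomposition of a local order into a local circular successor plus a neighbor selector --- as a reusable technique for classes where no collapse theorem is available; your route buys brevity at the cost of that content. Separately, your observation about isolated Gaifman vertices is a genuine wrinkle the paper glosses over: a vertex with empty neighbor set admits no neighbor selector (and no single-orbit permutation of its neighbors), so $\CC\oplus\NN$ is strictly speaking not a presentation scheme on all of $\mfF_\sigma^d$; restricting $\mfK$ as you do, or amending the definitions of $\CC$ and $\NN$ to be vacuous at isolated vertices, is needed either way.
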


\begin{proof}
    We know that, by \autoref{lem:epssubOO}, $(2)$ implies $(3)$. It remains to be show, then, that $(1)$ implies $(2)$ and that $(3)$ implies $(1)$. For the former, we proceed so as to use the lemma roughly described in \autoref{rem:rhosubeps}. That is to say, we define $\mathrm{FO}+\epsilon$ formulae $\varphi_C$ and $\varphi_N$ (without parameters, in fact) such that the former defines a circular successor and the latter defines a neighbor selector. $\varphi_N(x,y)$ simply says that the choice function $\epsilon$ chooses $y$ from among the neighbors of $x$ in $\mcG(\mcA)$ (as $S_1^\mcA(x)$ is definable). The latter abuses the fact that there are at most $d$ elements of $S_1^\mcA(x)$. An auxiliary distinguishes via $\epsilon$ some element $x_1$ of $S_1^\mcA(x)$, then distinguishes an element $x_2$ of $S_1^\mcA(x)\setminus \{x_1\}$, then an element $x_3$ of $S_1^\mcA(x)\setminus \{x_1,x_2\}$, and so on up to $x_d$ (or until $x_i$ where there are $x_i$ elements in $S_1^\mcA(x)$). $\varphi_C$ then says that $x_{k+1}$ is the successor of $x_k$ and that $x_1$ is the successor of $x_i$ where $i$ is the number of elements of $S_1^\mcA(x)$.
    
    For $(3)$ implies $(1)$, we proceed so as to use the variant of \autoref{lem:elemcapt} described in \autoref{rem:rhosubeps}. The neighbor selector will define the minimal element of $S_1^\mcA(x)$ with respect to the local order, and because there are at most $d$ elements of $S_1^\mcA(x)$, we can enumerate them and recover a linear ordering from the circular successor. In slightly more detail for the latter, we can do a similar construction to $\varphi_C$ in that the neighbor selector distinguishes some $x_1$, and the local circular successor $C$ distinguishes an $x_{k+1}$ for any $k$ (with $k$ less than the number of elements of $S_1^\mcA(x)$). From this labeling, which can consist of at most $d$ elements, we get a definable local ordering where $x_i$ is less than $x_j$ if and only if $i$ is less than $j$.
\end{proof}

The reader may have noted that simply containing the $\CC\oplus\NN$-invariant elementary queries in the $\OO$-invariant elementary ones would be sufficient, since trivially every elementary query is an $\PP$-invariant or $\epsilon$-invariant elementary query. However, the example illustrates several important and generalizable points. 

The first is that we are able to decompose $\OO$ into $\CC\oplus \NN$ on classes of bounded degree. This tells us that other classes may have such decompositions, such as $\LL$, or the class of traversals $\TT$. As we will see later, combining simpler presentation schemes in this way can give us an upper bound on the expressivity of epsilon-invariant logic in terms of presentation schemes. 

The second important principle, which the example does not directly illustrate, is that decompositions like $\OO$ into $\CC\oplus \NN$ can make it easier to use more traditional arguments for invariant logics, especially to play games. Often when one wishes to show that every $\PP$-invariant elementary query is elementary on some class of structures (classes of bounded degree, say, or of bounded treewidth), one will show that for structures $\mcA$ and $\mcB$ such that $\mcA$ is equivalent to $\mcB$ with respect to $\mathrm{FO}[f(k)]$ sentences with $f$ some function $\omega$ to $\omega$, then $\mcA$ and $\mcB$ have presentations $\mcA',\mcB'\in\PP$ such that $\mcA'$ and $\mcB'$ are equivalent with respect to $\mathrm{FO}[k]$. This is the technique of \cite{DBLP:journals/lmcs/Grange21} to show that successor-invariant logic collapses to first-order logic on classes of bounded degree, the technique of \cite{DBLP:conf/csl/Grange23} to show that two-variable order-invariant logic collapses to first-order logic on classes of bounded degree, and so on. If $\mcK\subseteq \mfF_\sigma$ is of bounded degree, then the class of $C$-presentations $(\mcA,C)$ with $\mcA\in\mfK$ and $C$ a local circular successor on $\mcA$ is also a class of bounded degree. This preservation of degree-boundedness also holds for neighbor selectors. Thus, if we can use the technique above for each of $\CC$ and $\NN$ independently on classes of bounded degree, then the technique can be used for their join (thus bounding $\OO$). This is significant because these structures are much more flexible than a full local order. It is possible that a similar technique could be used to bound invariant logics on, say, classes of bounded tree-width.

We now introduce the presentation schemes which we will combine to upper-bound epsilon-invariant logic, and show that this is indeed an upper bound.

\begin{definition}
    Let $Z$ be a $k+1$-ary relation on $\mcA\in\mfF_\sigma$. We say that $Z$ is a $k,r,\varphi$ selector if for every list $x_1,\dots x_k$ of not necessarily distinct elements of $\mcA$, there is a unique $y$ such that $Z(x_1,\dots x_k,y)$. Also, $y$ is not in $B_r^\mcA(x_i)$ for any $i$, and $\varphi(y)$ holds in $\mcA$. We let $\ZZ_{k,r,\varphi}$ be the class of $Z$-presentations which are $k,r,\varphi$ selectors.
\end{definition}

\begin{proposition}
    Let $\mfJ$ be an $\epsilon_k$-invariant elementary boolean query on $\mfK\subseteq\mfF_\sigma$. Then $\mfJ$ is $\OO\oplus \bigoplus_{\varphi\in\mathrm{FO}[k]}\ZZ_{k,r_{k},\varphi}$-invariant elementary.
\end{proposition}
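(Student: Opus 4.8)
The plan is to reduce to \autoref{lem:epscontained} (in its $\rho$-presentation form, as sanctioned by \autoref{rem:rhosubeps}), using the join $\PP = \OO\oplus\bigoplus_{\varphi\in\mathrm{FO}[k]}\ZZ_{k,r_k,\varphi}$, where $r_k$ is the Gaifman-locality radius from \autoref{thm:gaifmanlocality} applied to the finitely many $\epsilon_k$-term-defining formulae of quantifier rank $\le k$ and with $\le K$ free variables. Concretely, I would fix once and for all a uniform $r_k$ that works simultaneously as a Gaifman radius for every such $\varphi(\vec x,y)$, so that the value of the set $\{a:\varphi(\vec a,a)\}$ — more precisely whether a given point lies in it — depends only on the $r_k$-neighborhood type of that point. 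The first step is then to identify what parameters $\vec z$ and what sentence $\psi(\vec z)$ to use: here $\vec z$ will be empty (or a dummy), since the presentation scheme itself already encodes all the choices we need. So $\psi$ is trivially true, and the whole burden falls on constructing, for each $\mathrm{FO}[k]$ formula $\varphi(\vec x,y)$, a formula $\Phi_\varphi(\vec x,y)$ in the language of $\sigma\cup\rho$ that picks out a unique witness and satisfies $\Phi_\varphi(\vec a,y)\to\varphi(\vec a,y)$ whenever $\varphi(\vec a,\cdot)$ is satisfiable.

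The core construction of $\Phi_\varphi$ is a case split mirroring the one sketched in the proof of \autoref{lem:epssubOO}, but now without degree-boundedness, which is exactly why the $\ZZ_{k,r_k,\varphi}$ components are needed. Given $\vec a$ (of length $\le K$), consider the set $W = \{y : \varphi(\vec a,y)\}$. Case one: $W$ meets $\bigcup_i B_{r_k}^{\mcA}(a_i)$. Since $\mathrm{FO}[k]$ can define the relation "$y\in B_{r_k}(a_i)$" and the formula $\varphi$ itself, it can define $W\cap\bigcup_i B_{r_k}(a_i)$; this is a subset of a union of $K$ balls of radius $r_k$, and although these balls need not be finite-of-bounded-size, they do have bounded \emph{radius}, so I can use the $\lambda_{r_k}$ construction from the proof of \autoref{lem:bounddiamcapt} — built from the local order $\preceq$ supplied by the $\OO$-component — to linearly order each $B_{r_k}(a_i)$, then amalgamate these $K$ orders into a single definable linear order on the union, and take the $\preceq$-least element of $W\cap\bigcup_i B_{r_k}(a_i)$ as the witness. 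Case two: $W$ is disjoint from every $B_{r_k}(a_i)$. By Gaifman locality, whether $W$ is empty at all is determined by the $r_k$-neighborhood types present, and if $W\neq\emptyset$ then — crucially — $W$ contains a point satisfying $\varphi$ that lies outside all the balls, so $\varphi$ itself plays the role of the selector predicate: we let $Z$ be the component $\ZZ_{k,r_k,\varphi}$ and take the witness to be $Z(\vec a,\cdot)$, which by definition of a $k,r_k,\varphi$ selector is a unique point outside every $B_{r_k}(a_i)$ satisfying $\varphi$. The formula $\Phi_\varphi$ is the disjunction of these two cases, guarded by an $\mathrm{FO}$ sentence (in $\sigma$ alone) expressing which case we are in; uniqueness of $y$ holds because within each case we have pinned down a single element, and the cases are mutually exclusive by construction.

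A few verifications are then routine but should be spelled out. One must check that $\ZZ_{k,r_k,\varphi}$ is genuinely a presentation scheme — i.e., that for every $\mcA\in\mfK$ a $k,r_k,\varphi$ selector exists; this can fail only if for some $\vec a$ every point satisfying $\varphi$ lies inside $\bigcup_i B_{r_k}(a_i)$, but in that situation Case one already supplies a witness, so one handles the nonexistence of a global selector by allowing $Z$ to be defined arbitrarily (say, constantly some fixed satisfying point) precisely on those $\vec a$ for which Case two cannot arise, and noting that $\Phi_\varphi$ never consults $Z$ on such tuples. (Alternatively, redefine $\ZZ_{k,r_k,\varphi}$ to only constrain $Z$ on tuples $\vec a$ for which a valid value exists — this is the cleaner route and should be folded into the definition.) One must also confirm that $\OO\oplus\bigoplus_\varphi\ZZ_{k,r_k,\varphi}$ is a legitimate $\rho$-presentation scheme, which follows from the remark after the definition of join. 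Finally, applying \autoref{lem:epscontained} (suitably stated for $\rho$-presentations, per \autoref{rem:rhosubeps}) yields that every $\epsilon_k$-invariant elementary Boolean query on $\mfK$ is $\PP$-invariant elementary, and since an arbitrary $\epsilon$-invariant query is $\epsilon_k$-invariant for some $k$, we are done.

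The main obstacle I anticipate is \emph{not} the case analysis itself but getting the uniformity right: the radius $r_k$ must be chosen before fixing the presentation scheme, yet it must work for \emph{all} $\mathrm{FO}[k]$ formulae $\varphi(\vec x,y)$ simultaneously — there are infinitely many such $\varphi$ up to logical equivalence only finitely many, so one takes the max of the finitely many Gaifman radii — and then the join $\bigoplus_{\varphi\in\mathrm{FO}[k]}\ZZ_{k,r_k,\varphi}$ is a join over that same finite set of equivalence-class representatives, so the resulting $\rho$ is a genuinely finite relational vocabulary. Making sure the quantifier-rank bookkeeping closes (the $\epsilon_k$ terms have quantifier rank $\le k$, feeding into Gaifman locality with a bound depending only on $k$, producing an $r_k$ depending only on $k$) is the delicate point; everything downstream is a mechanical translation along the lines already executed in \autoref{lem:epssubOO} and \autoref{lem:bounddiamcapt}.
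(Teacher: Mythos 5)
Your proposal takes essentially the same route as the paper's (three-sentence) proof: follow the argument of \autoref{lem:epssubOO}, use Gaifman locality to fix a uniform radius $r_k$, resolve witnesses inside $\bigcup_i B_{r_k}(a_i)$ via the $\lambda_{r_k}$ ordering built from the $\OO$-component, and delegate far-away witnesses to the $k,r_k,\varphi$ selectors precisely because the parameter trick of the bounded-degree case breaks down. You supply considerably more detail than the paper does, and you correctly flag a real issue the paper glosses over, namely that $\ZZ_{k,r,\varphi}$ as defined need not be a presentation scheme (a selector value may fail to exist for some tuples); your patch of constraining $Z$ only where a valid value exists is the right fix.

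One point where your write-up (like the paper's sketch) does not fully close: the selector $\ZZ_{k,r_k,\varphi}$ is indexed by a \emph{unary} formula $\varphi(y)$, but in Case two you need a witness for the \emph{parameterized} set $\{y:\varphi(\vec a,y)\}$, and $Z(\vec a,\cdot)$ only guarantees a point satisfying the unary index formula. Bridging this requires observing (via Gaifman's normal form, not just Gaifman locality of unary queries) that for $y$ outside all $B_{r_k}(a_i)$ the condition $\varphi(\vec a,y)$ is equivalent to one of finitely many parameter-free local formulas $\varphi'(y)$, with the choice of $\varphi'$ itself first-order definable from $\mcA$ and $\vec a$; the disjunct for each $\varphi'$ then consults the selector indexed by that $\varphi'$. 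This also affects the quantifier-rank bookkeeping you rightly worry about, since the $\varphi'$ produced by the normal form need not stay within rank $k$.
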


\begin{proof}
    The proof of this fact is very similar to that of \autoref{lem:epssubOO}. The difference is that we cannot fix the sequence of elements with neighborhood types as parameters, both because there are in general infinitely many $r_k$-neighborhood types and because the radius $r_k$ balls around a fixed number of elements might cover arbitrarily many elements of a given neighborhood type. Thus, we use the $k,r_k,\varphi$ selectors to distinguish elements outside of the relevant balls instead.
\end{proof}

Of course, in this current form, we have something fairly unwieldy. However, on classes of bounded tree-width or planar graphs, it stands to reason that this can be simplified and used in a way analogous to $\CC\oplus \NN$ on classes of bounded degree.

\section{Conclusions}

We have established the collapses of local-order-invariant and epsilon-invariant logic on classes of bounded degree, as well as answered several questions of Weinstein. We have also introduced a way to decompose presentation schemes into simpler ones. Interesting avenues for further investigation include the following.
\begin{enumerate}
    \item Does local-order-invariant logic collapse on other tame graph classes? One way to study this might be decompositions as above, but another might be to extend \autoref{thm:localinvhanf} to said tame graph classes instead.
    \item Can upper bounds such as these on epsilon-invariant logic provide us with separation results? It has long been open whether or not epsilon-invariant logic is equal in expressibility to order invariant logic. Could we, for instance, answer this question by demonstrating that order-invariant logic does not collapse to first-order logic on classes of bounded degree? 
    \item Can we find decompositions of order- or traversal-invariant logic which might help us prove upper bounds on those?
\end{enumerate}

%%
%% Bibliography
%%

%% Please use bibtex, 

\bibliography{blah.bib}

\appendix
\section{Section 3 Details}

\begin{proof}[Proof of \autoref{lem:elempres}]\label{proof:elempres}
    First, we demonstrate the straightforward definition of $\varphi_{LO,\psi}(R,x)$ mentioned earlier:
    \begin{align*}
    \varphi_{LO,\psi}(R,\vec x)\equiv (\forall x,y,z) (\psi(\vec x,x)\wedge\psi(\vec x,y)\wedge\psi(\vec x,z))\rightarrow& \bigg(R(x,x) \wedge \\
    &(R(x,y)\wedge R(y,x)\rightarrow x=y) \wedge \\
    & (R(x,y)\vee R(y,x)) \wedge \\
    & (R(x,y)\wedge R(y,z)\rightarrow R(x,z))\bigg).
\end{align*}
    
    The construction of $\beta_r$ will be done inductively, but first, some setup. Let $V_n$ denote $\{x,y,x_1,\dots x_n\}$, and let $Z_n$ denote the set of tuples $(z_1,\dots z_{m})$ for $m\leq n$, where each $z_i$ is in $V_n$ and there are $j,k$ such that $z_j=x$ and $z_k=y$. If for every $R$ in $\sigma$, the arity of $R$ is $a_R$, and $a_{max}=\max(\{a_R:R\in\sigma\})$, then we can define the edge relationship $\eta$ in $\mcG(\mcA)$ as follows:
    \begin{align*}
        \eta(x,y)\equiv x=y\vee (\exists x_1,\dots x_{a_{max}})\bigvee_{\substack{R\in \sigma\\ \vec z \in Z_{a_R}}} R(\vec z).
    \end{align*}
    From here, defining $\beta_r$ is simply the statement that there is a path in $\mcG(\mcA)$ of length at most $r$, which can be stated as follows:
    \begin{align*}
        \beta_r(x,y)\equiv \bigvee_{r'\leq r} (\exists x_1,\dots x_{r'-1}) \eta(x,x_1)\wedge \eta(x_{r-1},y)\wedge\bigwedge_{i\leq r'-2} \eta(x_i,x_{i+1}).
    \end{align*}
    We will say that $\beta_0(x,y)$ will be defined as $x=y$. Now, strictly speaking, the full generality of the definition of $\beta_r$ was not necessary; we could have simply defined $\beta_1$ for the purposes of the lemma. However, this definition will be useful for later proofs, so we put it here. We can also define, from this, $\phi_r(x,y)\equiv \beta_r(x,y)\wedge\neg \beta_{r-1}(x,y)$, which says that $y$ is in $S_r^\mcA(x)$. We now finally define the sentence which says that $\preceq$ is a local order:
    \begin{align*}
        (\forall x,y,z,w) (\neg\phi_1(x,y)\vee \neg\phi_1(x,z)\rightarrow \neg\preceq(x,y,z))\wedge \varphi_{LO,\phi_1}(\preceq,x).
    \end{align*}
\end{proof}

\section{Section 4 Details}

\begin{proof}[Proof of \autoref{lem:elemcapt}]\label{proof:elemcapt}
    We now explain this in more detail. Let $\theta$ be a first-order sentence in the language of $\sigma\cup\{R\}$. Then converting to prefix normal form and then conjunctive normal form yields a sentence
    \begin{align*}
        \theta'\equiv (Q_1y_1)\dots (Q_nxy_n)\bigwedge_{i\leq j}\bigvee_{k\leq l}\phi_{i,k}(y_1,\dots y_n),
    \end{align*}
    where each $Q_m$ is an existential or universal quantifier and each $\phi_{i,k}$ is either of the form $T_{i,k}(\vec {z})$ or $\neg T_{i,k}(\vec z)$, where $T_{i,k}$ is a relation in $\sigma\cup\{R\}$ with arity $a_{T_{i,k}}$ and $\vec z$ is a tuple in the set $\{y_1,\dots y_n\}^{a_{T_{i,k}}}$. For each $\phi_{i,k}$, define $\phi'_{i,k}$ such that
    \begin{enumerate}
        \item if $T_{i,k}$ is $R$, then if $\phi_{i,k}$ is of the form $T(\vec z)$ or $\neg T(\vec z)$, $\phi'(x_1,\dots x_n,\vec z)$ is the formula $\varphi(\vec x,\vec z)$ or $\neg \varphi(\vec x,\vec z)$ respectively, and
        \item if $T_{i,k}$ is in $\sigma$, then $\phi'_{i,k}$ is simply $\phi_{i,k}$.
    \end{enumerate}
    For sufficient $\vec x$, $\varphi(\vec x,\vec z)$ defines a relation such that $(A,\varphi(\vec x,\vec z))\in\PP$, so for sufficient choice of $\vec x$, $\theta'$ is equivalent to
    \begin{align*}
        \theta''(\vec x)\equiv (Q_1 y_1)\dots(Q_ny_n)\bigwedge_{i\leq j}\bigvee_{k\leq l} \phi_{i,k}'(\vec x,y_1,\dots y_n).
    \end{align*}
    Finally, if $\vec x$ has length $m$, the $\PP$-invariant formula equivalent to $\theta$ is
    \begin{align*}
        \theta'''\equiv (\forall x_1,\dots x_m) \varphi'(x_1,\dots x_m)\rightarrow \theta''(x_1,\dots x_m).
    \end{align*}
\end{proof}

\begin{proof}[Proof of \autoref{lem:bounddiamcapt}]\label{proof:bounddiamcapt}
    We demonstrate that $\lambda_r$ is first-order definable. We have already shown how to define $\lambda_1$. From $\lambda_r$ we will define $\lambda_{r+1}'(x,y,z)$ as follows:
        \begin{align*}
            \lambda_{r+1}'(x,y,z)\equiv (\exists x_1)(\forall x_2)\beta_r(x,x_1)\wedge \eta(x_1,y)\wedge (\beta_r(x,x_2)\wedge\eta(x_2,z)\rightarrow \lambda_r(x,x_1,x_2)).
        \end{align*}
        This breaks in some ways when $y$ and $z$ are in $B_r^\mcA(x)$, but this does not matter, because we will only use it when it is guaranteed that $y$ and $z$ are in $S_{r+1}^\mcA(x)$. We will define several cases, of which $\lambda_{r+1}$ will be a disjunction:
        \begin{align*}
            \lambda_{r+1}^1(x,y,z)\equiv \beta_r(x,y)\wedge\beta_r(x,z)\wedge \lambda_r(x,y,z).
        \end{align*}
        This says that both $y$ and $z$ are in the $B_r^\mcA(x)$ initial segment, and as mentioned, are ordered based on that. We also have the following:
        \begin{align*}
            \lambda_{r+1}^2(x,y,z)\equiv \beta_r(x,y)\wedge \phi_{r+1}(x,z),
        \end{align*}
        which says that $y$ is in the initial segment and $z$ is not, but that $z$ is in $S_{r+1}^\mcA(x)$. Before we cover our final individual case, we introduce a part of it, which says that if $x_1$ is the least element which $y$ and $z$ are connected to (in the case where they are equivalent with respect to $\lambda_{r+1}'$), $y$ is less than $z$ with respect to $\preceq(x_1,y,z)$:
        \begin{align*}
            \lambda_{r+1}^{3'}(x,y,z)\equiv (\exists x_1)(\forall x_2) &\beta_r(x,x_1)\wedge\eta(x_1,y)\wedge\eta(x_1,z) \\
            &\wedge ((\beta_r(x,x_2)\wedge (\eta(x_2,y)\wedge\eta(x_2,z)))\rightarrow \lambda_r(x,x_1,x_2)).
        \end{align*}
        Our final individual case, which defines the ordering of $S_{r+1}^\mcA(x)$ as previously described:
        \begin{align*}
            \lambda_{r+1}^3(x,y,z)\equiv &\phi_{r+1}(x,y) \wedge \phi_{r+1}(x,z) \\
            &\wedge \lambda_r'(x,y,z)\wedge (\neg\lambda_r'(x,z,y)\vee (\lambda_r'(x,z,y)\wedge \lambda_{r+1}^{3'}(x,y,z))).
        \end{align*}
        We finish with the definition of $\lambda_{r+1}(x,y,z)$:
        \begin{align*}
            \lambda_{r+1}(x,y,z)\equiv \lambda_{r+1}^1(x,y,z)\vee \lambda_{r+1}^2(x,y,z)\vee\lambda_{r+1}^3(x,y,z).
        \end{align*}
        Finally, by the bounded diameter of $\mfK$, there is some $d$ such that for every $\mcA\in\mfK$ and every $x$ in $\mcA$, $\lambda_{d}(x,y,z)$ defines a linear ordering on $\mcA$. Thus, by \autoref{lem:elemcapt}, we have the theorem.
\end{proof}

\begin{proof}[Proof of \autoref{ex:applications}]\label{proof:applications}
    Fix $k$. For any $S_k$ which is the graph of a permutation, the following auxiliary formula says (for natural $i$) that a length $k$ tuple $(a_1,\dots a_k)$ satisfies $a_m\neq a_n$ for $m\lneq n\leq i$, and that $a_m=a_n$ for $j,k\gneq i$:
        \begin{align*}
            \varphi_i'(a_1,\dots a_k)\equiv  \left(\bigwedge_{m\lneq n\leq i} a_m\neq a_n\right) \wedge \left(\bigwedge_{m,n\geq i} a_m= a_n\right). 
        \end{align*}
        The proper $\varphi'$ which we will invoke for the application of \autoref{lem:elemcapt} is this one, which says that each of the $a_i$'s is a fixed point, that the tuple follows the $\varphi_i'$ structure for some $i$, and that $b$ is not a fixed point:
        \begin{align*}
            \varphi'(a_1,\dots a_k,b)\equiv (\forall x)&\left(S_k(x,x)\rightarrow \bigvee_{m\leq k} a_m=x\right)\\&\wedge\left(\bigvee_{i\leq k}\varphi_i'(a_1,\dots a_k)\wedge \bigwedge_{m\leq i}S(a_m,a_m)\right) \\&\wedge ((\exists x)\neg S_k(x,x) \rightarrow \neg S_k(b,b)).
        \end{align*}
        Now, we define the $\varphi$ which completes our application of the lemma:
        \begin{align}
            \varphi(a_1,\dots a_k,b, x,y)\equiv& ([\varphi_0'(a_1,\dots a_k)\vee(x\neq b\wedge\neg S_k(b,y)\wedge x\neq y)]\wedge S_k(x,y))\\
            &\vee \bigvee_{\substack{1\leq i\leq k \\ m\lneq i}} \bigg(\varphi_i'(a_1,\dots a_k)\\
            &\wedge [(x=b\wedge y=a_1)\vee (x=a_i\wedge S_k(b,y))\\
            &\vee (x=a_m\wedge y=a_{m+1}\wedge a_m\neq a_{m+1})]\bigg).
        \end{align}
        Informally, this definition takes any fixed points and inserts them between $b$ and its successor.
\end{proof}

\section{Section 5 Details}

\begin{proof}[Proof of \autoref{lem:epscontained}]\label{proof:epscontained}
    For any $\epsilon$-invariant sentence $\theta\equiv (Q_1x_1)\dots (Q_nx_n)\theta'(x_1,\dots x_n)$ with $\theta'$ quantifier-free, we replace every $\epsilon$ term $\epsilon_x(\varphi(\vec x, y))$ in $\theta'$ with a variable $x_\varphi$, yielding a formula $\theta''(x_1,\dots x_n,\vec {x_\varphi},\vec z)$, where $\vec {x_\varphi}$ is a tuple containing $x_\varphi$ for every $\epsilon_k$ argument $\varphi(\vec x, y)$. Enumerating the possible arguments $\varphi_1,\dots \varphi_m$ (that there are finitely many up to equivalence is a folklore result, see \cite{DBLP:books/sp/Libkin04}), we write for our equivalent $\PP$-invariant sentence 
    \begin{align*}
        \theta''\equiv &(\forall z_1)\dots (\forall z_l)(Q_1x_1)\dots (Q_nx_n)(\forall x_{\varphi_1})\dots (\forall x_{\varphi_m}) \\
        &\psi(z_1,\dots z_l)\wedge \left(\bigwedge_{i\leq m} \Phi_{\varphi_i}(x_1,\dots x_n,x_{\varphi_i},z_1,\dots z_l)\right)
         \\
         &\rightarrow \theta''(x_1,\dots x_n,x_{\varphi_1},\dots x_{\varphi_m},z_1,\dots z_l).
    \end{align*}
    This suffices because the $x_\varphi$'s are chosen for every $x_1,\dots x_n$ only by the $\PP$ structure and the parameters $z_1,\dots z_l$.
\end{proof}

\begin{proof}[Proof of \autoref{lem:epssubOO}]\label{proof:epssubOO}
    We begin so as to make use of \autoref{lem:epscontained}, showing instead that every $\epsilon_k$-invariant elementary boolean query is $\OO$-invariant elementary. We make strong use of \autoref{thm:gaifmanlocality}. There are finitely many $\mathrm{FO[k,k+1]}$ formulae $\varphi(x,\vec y)$ up to equivalence. For fixed $\vec y$, each of these is determined by the $r$-neighborhood type of $x$ in the structure $(\mcA,\vec y)$ (i.e., with each element of $\vec y$ given a constant symbol) for some $r$. 
    
    Thus, there is some fixed $r_k$ such that each $\varphi$ depends only on the $r_k$-neighborhood of $x$ in these pointed structures. By degree-boundedness, there are finitely many such neighborhood types, which we label $\tau_1,\dots \tau_n$, so each $\varphi$ is a boolean combination of the $\varphi_{\tau_i}$ which define such neighborhood types. Thus, we can restrict our attention to defining choice functions for the $r_k$-neighborhood types. 
    
    For any neighborhood type $\tau_i$ of the pointed structures described above, either $\tau_i$ contains one of the constants or it does not. If it does, then the linear order we will now describe lets us select in a definable way a minimum element with that neighborhood type. $\vec y$ has length at most $k$, so a local order $\preceq$ defines a linear order on $B_{r_k}^\mcA(y_1)\cup \dots B_{r_k}^\mcA(y_i)$ where $i$ is the length of $\vec y$. Specifically, this linear order has the elements of $A_j=B_{r_k}^\mcA(y_1)\cup \dots B_{r_k}^\mcA(y_{j})$  precede the elements of $B_{r_k}^\mcA(y_{j+1})\setminus A_j$, and $\lambda_{r_k}(x,y,z)$ orders each $A_j$ internally. 
    
    We now explain the procedure when $\tau_i$ does not contain one of the constants. We define $\psi(\vec z)$, where $\vec z$ is a tuple including a variable $z_{j,j'}$ for every $j\leq d\cdot n(d,r_k)$ and $j'\leq n$ (recall that a neighborhood of radius $r$ in a structure of degree at most $d$ has cardinality at most $n(d,r)+1$). $\psi$ says that for every $j'$, either $z_{j,j'}$ is distinct from $z_{j'',j'}$ for every $j$ and $j'$ or every element with $r_k$-neighborhood type $\tau_{j'}$ is equal to some $z_{j,j'}$. For each $j'$, we then can take the least $j$ such that $z_{j,j'}$ is not in the $r_k$-neighborhoods of any of the constants, and select $z_{j,j'}$ as the distinguished element of type $\tau_{j'}$. Note that such a $z_{j,j'}$ must exist by the Pigeonhole Principle.
\end{proof}

\end{document}